\documentclass[11pt,reqno]{amsart}
\input{glyphtounicode}
\usepackage[T1]{fontenc}
\usepackage[utf8]{inputenc}
\usepackage{lmodern}
\usepackage[a4paper,margin=27mm]{geometry}
\usepackage{amsmath,amssymb,amsthm,mathtools}
\usepackage{microtype,booktabs,array,enumitem}
\usepackage{needspace}
\usepackage{xcolor}
\definecolor{linkblue}{RGB}{25,60,105}
\usepackage[colorlinks=true,linkcolor=linkblue,citecolor=linkblue,urlcolor=linkblue]{hyperref}
\usepackage{bookmark}
\usepackage[nameinlink,noabbrev]{cleveref}
\crefname{theorem}{Theorem}{Theorems}
\crefname{lemma}{Lemma}{Lemmas}
\crefname{proposition}{Proposition}{Propositions}
\crefname{corollary}{Corollary}{Corollaries}
\crefname{definition}{Definition}{Definitions}
\crefname{section}{Section}{Sections}
\crefname{subsection}{Section}{Sections}
\crefname{problem}{Problem}{Problems}
\crefname{question}{Question}{Questions}
\allowdisplaybreaks[2]
\setlist{itemsep=3pt,topsep=5pt}
\numberwithin{equation}{section}
\newtheorem{theorem}{Theorem}[section]
\newtheorem{lemma}[theorem]{Lemma}
\newtheorem{proposition}[theorem]{Proposition}
\newtheorem{corollary}[theorem]{Corollary}
\theoremstyle{definition}
\newtheorem{definition}[theorem]{Definition}
\newtheorem{problem}[theorem]{Problem}

\theoremstyle{remark}

\newcommand{\dH}{d_{\mathrm H}}

\title[Chromatic villainy: unions and algorithms]{Disjoint unions and fixed-parameter algorithms\\for chromatic villainy}
\author[Jiang, Jiang, Li, Wang, and Zhang]{Jiang Jiang, Yuhan Jiang, Yiyang Li, Lujia Wang, and Aitao Zhang}
\address{Hainan Bielefeld University of Applied Sciences, Danzhou, China}
\email{lujia.wang@hibiuh.edu.cn}
\email{\{x20250083,x20250116,x20250157,x20250275\}@stu.hibiuh.edu.cn}
\hypersetup{pdftitle={Disjoint unions and fixed-parameter algorithms for chromatic villainy},
pdfauthor={Jiang Jiang, Yuhan Jiang, Yiyang Li, Lujia Wang, Aitao Zhang}}
\date{7 October 2026}
\begin{document}
\begin{abstract}
Chromatic villainy is the maximum, over rearrangements of optimal
graph colorings, of the minimum Hamming distance to a proper coloring
with the same color multiplicities. Weak villainy allows the repair to
change these multiplicities. We give exact composition laws for
disjoint unions and determine the eventual growth under repetition.
Write $mG$ for the disjoint union of $m$ copies of $G$.
For every graph $G$, with $k=\chi(G)$, weak villainy satisfies
$B_{\mathrm w}(mG)=mF_k(G)$ whenever $k\mid m$, where $F_k(G)$ is
the maximum distance to a proper $k$-coloring over all assignments of
$k$ color labels to vertices. Its bounded defect from $mF_k(G)$ is eventually
periodic; we give an explicit stabilization bound and identify the
least period, which divides $k$. For ordinary villainy, we obtain a
computable rational slope $\Lambda(G)\ge F_k(G)$ such that
$B(mG)=m\Lambda(G)+O_G(1)$, where the error is bounded independently
of $m$ and eventually periodic, with a computable period and threshold.
Consequently the difference between the two
values is bounded exactly when $\Lambda(G)=F_k(G)$, and otherwise
grows linearly. The slope is determined by a finite optimization using
at most $k$ source assignments. We also answer the unrestricted
superadditivity questions negatively: for every $C>0$, a connected
graph can lose more than $C$ in weak villainy after union with a
triangle, and some connected $3$-chromatic graph $G$ satisfies
$B(G\sqcup G)<2B(G)-C$. Finally, both exact values on an $n$-vertex
graph can be computed in time $f(B_{\mathrm w}(G))(n+1)^{O(1)}$,
without either value being supplied. Threshold recognition is
fixed-parameter tractable as well.
\end{abstract}
\maketitle
\section{Introduction}

Chromatic villainy measures the cost of repairing a rearrangement of
an optimal coloring. The cost counts vertices whose colors change.
In the \emph{villainy} $B(G)$ of Clark et al.~\cite{clark2006},
the repair must preserve every color multiplicity; \emph{weak
villainy} $B_{\mathrm w}(G)$ permits different multiplicities on the
same $\chi(G)$-color palette. Both parameters maximize the minimum
repair distance over all such rearrangements. Thus
$0\le B_{\mathrm w}(G)\le B(G)\le |V(G)|$.
A \emph{word} is any assignment of color labels to vertices, without
requiring properness. Its \emph{profile} is the vector of its labeled color
multiplicities; it is feasible if a proper coloring realizes it
(\cref{def:profile}). We call a rearranged source coloring a disturbance.

We study the behavior of these parameters under disjoint union and the
complexity of recognizing graphs with a fixed value. Both questions
were raised in the early literature; see~\cite{clark2006,west2006} and
the account of Jahanbekam and Lin~\cite{jahanbekam2021}. Clark et al.
proved that weak villainy is superadditive under disjoint union when
the summands have the same chromatic number. They proved the ordinary
analogue under the additional assumption that every optimal coloring
of one summand has equal-sized color classes. It was natural to ask
whether these restrictions could be removed, and to determine when
equality holds. On the algorithmic side, the classifications of graphs
with villainy zero~\cite{clark2006} and villainy
two~\cite{jahanbekam2021} give polynomial-time recognition in those
cases. The complexity at other fixed values remained to be determined.

This paper continues our study in~\cite{jiang2026exact}, where we
determined odd-cycle profile radii, obtained complete multipartite
formulas, and determined the extremal values at fixed order and
chromatic number. Here we turn to disjoint unions and computation.
The profile viewpoint remains central, but composition requires
tracking transfers of color counts and, for ordinary repair,
simultaneous responses to different target profiles.
Our aim is to describe how global multiplicity constraints combine
local repair costs, and to determine when those costs can be computed
efficiently. For unions, we retain source profiles and \emph{repair
responses}, which record the cost of repairing one source at each
target profile (\cref{def:responses}). For graphs of small weak
villainy, a structural bound reduces all relevant colorings to boundedly
many vertex types, grouping vertices with the same neighbors in a
specified vertex set (\cref{rec:types}).

\subsection{Disjoint unions and profile composition}

The requirement that the global source profile be feasible couples
the components.
Although a disturbance preserves the total number of occurrences of
each color, its restriction to a component need not have the profile
of any proper coloring of that component. Similarly, an ordinary
repair may transfer color multiplicities between components while
preserving their global totals. When the chromatic numbers differ,
one component also gains access to a larger palette. These features
distinguish villainy from the unrestricted Hamming covering radius,
the greatest distance of any word to a prescribed set of words.
That radius is additive on Cartesian products of codes
\cite[Section~II.C]{cohen1985}.

Our first results answer both unrestricted superadditivity questions
negatively. For every real $C>0$,
there exists a connected graph $G$ such that
\[
 B_{\mathrm w}(G\sqcup K_3)
 <B_{\mathrm w}(G)+B_{\mathrm w}(K_3)-C.
\]
Thus one summand in the weak counterexample can be a fixed triangle.
For ordinary villainy, equal chromatic numbers do not suffice: for
every real $C>0$, there exists a connected graph $G$ with $\chi(G)=3$
such that
\[
 B(G\sqcup G)<2B(G)-C.
\]
\cref{union:weak-counterexample,union:strong-counterexample} give explicit constructions.

To describe what replaces additivity, we keep track of source and
target profiles. For weak repair, the relevant local information
is the largest repair distance among words of each prescribed source
profile, with the palette fixed in advance.
\cref{union:transfer-theorem}
expresses the value on a union through these local radii. The formula
separates the effect of changing the palette from that of transferring
color counts between components. It gives an exact equality criterion
in local profile data, including when the chromatic numbers differ.

Repeated copies admit a particularly simple weak-repair theorem. Let
$mG$ denote the disjoint union of $m$ copies of $G$, put $k=\chi(G)$,
and let $F_k(G)$ be the $k$-fixing number of Garnero et al.
\cite{garnero2018}: the maximum distance to a proper $k$-coloring over
\emph{all} source words on that palette. If $m=qk+r$ with $0\le r<k$,
\cref{union:repetition-theorem} proves
\[
 0\le mF_k(G)-B_{\mathrm w}(mG)
 \le r\bigl(F_k(G)-B_{\mathrm w}(G)\bigr).
\]
In particular, $B_{\mathrm w}(mG)=mF_k(G)$ whenever $k$ divides $m$,
and $B_{\mathrm w}(mG)/m$ converges to $F_k(G)$. Moreover, weak
villainy is additive over every repetition if and only if
$B_{\mathrm w}(G)=F_k(G)$; it suffices to test the union of $k$ copies.
The proof balances the color counts of a worst unrestricted source
by cyclically relabeling its colors on $k$ copies.
\Cref{union:repetition-stabilization} makes the eventual behavior
effective. For $n=|V(G)|$ and $k\ge2$, put
\[
 M=\bigl(2(k-1)n+1\bigr)^{k-1},\qquad N=(k-1)M.
\]
For every $m\ge N$, the defect $mF_k(G)-B_{\mathrm w}(mG)$ is
periodic. Its least period is
\[
 h=\gcd\{\ell\ge1:B_{\mathrm w}(\ell G)=\ell F_k(G)\},
 \qquad h\mid k,
\]
and the gcd can be computed using only $\ell\le M$. Beyond $N$,
the fixing-number bound is attained exactly when $h\mid m$.
These weak-repair results hold on any fixed palette of size at least
$\chi(G)$.

Ordinary repair requires more information. A single disturbed coloring
has a repair cost for each possible target profile, and these costs
must be retained together. Maximizing each coordinate separately can
combine costs that no single disturbance realizes. We therefore use
the full response functions and discard only those bounded above at
every target profile by another realizable response. To combine two
responses, we minimize the sum of their costs over all splits of the
target profile. This operation, called min-plus convolution in
\cref{def:responses}, gives \cref{union:ordinary-composition}.
The resulting equality tests are finite criteria in component data;
they do not constitute a structural classification of all equality
pairs.

The response description also yields a general repetition theorem
for ordinary villainy. \Cref{union:ordinary-repetition} gives a
computable rational number $\Lambda(G)\ge F_k(G)$ such that
\[
 B(mG)=m\Lambda(G)+O_G(1).
\]
Here $O_G(1)$ denotes an error bounded by a constant depending only on
$G$. The slope comes from an optimization that allows fractional
proportions of copies to receive each source and target coloring,
as defined in~\eqref{union:fractional-response}.
Its maximum is attained by a distribution using at most $k$ source words.
Moreover, $B(mG)-m\Lambda(G)$ is eventually periodic, with a
computable period and threshold. Thus \cref{union:repetition-gap}
determines the possible growth of the difference between the repair
rules: it is bounded precisely when $\Lambda(G)=F_k(G)$, in which
case it is eventually periodic; otherwise it grows linearly.

\subsection{Fixed-parameter recognition}

Our algorithmic result computes both exact parameters using the
potentially smaller weak villainy as parameter. \Cref{rec:main}
gives running time $h(B_{\mathrm w}(G))(n+1)^c$, where $h$ is
computable and $c$ is an absolute constant, without either value being
supplied. Given an integer $t\ge0$, it also computes
the two truncated values
\[
 \min\{B(G),t+1\}\qquad\text{and}\qquad
 \min\{B_{\mathrm w}(G),t+1\}
\]
in time $f(t)(n+1)^c$ for a computable function $f$.
Thus both equality recognition, deciding whether a parameter equals
$t$, and threshold recognition, deciding whether it is at most $t$,
are fixed-parameter tractable in $t$: the polynomial exponent in the
input size is independent of $t$. They are polynomial-time problems
at every fixed value.
In particular,
unless $\mathrm P=\mathrm{NP}$, there is no fixed value at which
villainy recognition is NP-complete.

The structural input is a matching bound. If
$B_{\mathrm w}(G)\le t$, then either $G$ has a vertex cover of size at
most $2t$, or $\chi(G)\ge n-2t$ and the complement of $G$ has a vertex
cover of size at most $4t$. The same conclusion holds when
$B(G)\le t$. The proof groups vertices outside these covers by their
neighborhoods and retains only source profiles realized by optimal
proper colorings.
In the small-cover case, a bounded-length formula with variables for
the type sizes permits elimination of its quantified variables before
the input multiplicities are substituted. In the complement-cover
case, symmetries that fix the source coloring remove the need to
enumerate singleton color names individually.
The polynomial exponent is independent of $t$; the proof does not
give a practical bound on the function $f$.

\subsection{Related repair problems and organization}

Garnero et al.~\cite{garnero2018} study repair of an arbitrary given
coloring and the fixing number used above. De Biasi and
Lauri~\cite{debiasi2019} consider a source promised to be a
rearrangement of an optimal coloring. Their $r$-Fix-Promise problem
is an individual weak-repair instance; their swap problem uses the
number of color exchanges as cost. Our recognition problem
quantifies over every admissible disturbance.

Count-aware component optimization also appears in the two-color
repair algorithm of Fellows et al.~\cite{fellows2026}. It chooses
orientations of bipartite components for one given source, subject
to total color counts and a swap budget. For villainy, one must additionally retain jointly realizable
responses before taking the worst-source maximum.

Finally, endpoint Hamming distance differs from proper-coloring
reconfiguration~\cite{Bousquet2024,fuchs2025}, where every intermediate
coloring must be proper. Ordinary villainy constrains only the initial
and final color multiplicities.

\cref{sec:preliminaries} fixes the notation and definitions.
\cref{union:section} gives the counterexamples to
superadditivity. \cref{union:composition-section} develops the
profile composition laws and the repeated-copy theorem.
\cref{rec:section} proves fixed-parameter recognition, and
\cref{sec:questions} discusses further questions.

\section{Color profiles and repair distances}\label{sec:preliminaries}

All graphs are finite and simple. Unless otherwise stated, a graph $G$ is
nonempty, has order $n$, and has chromatic number $k$. We use a fixed set
$\mathcal C$ of $k$ labeled colors.

\Needspace{6\baselineskip}
\begin{definition}[Words and profiles]\label{def:profile}
A \emph{word} on $G$ is a map
$w:V(G)\to\mathcal C$, with no properness assumption. Its \emph{profile}
is the vector $\mathbf m(w)=(m_X(w))_{X\in\mathcal C}$, where
$m_X(w)=|w^{-1}(X)|$. Bold letters denote vectors. Inequalities between
vectors are coordinatewise, $\mathbf e_X$ is a coordinate unit vector,
and $\mathbf1$ is an all-one vector of the required dimension.

Write $\mathcal P(G)$ for the proper colorings with palette $\mathcal C$.
Since $|\mathcal C|=\chi(G)$, each such coloring uses every color. A profile
$\mathbf m$ is \emph{feasible} if it occurs in $\mathcal P(G)$.
\end{definition}

For a feasible profile, put
\[
 \mathcal W_{\mathbf m}(G)=
 \{w:V(G)\to\mathcal C:\mathbf m(w)=\mathbf m\},\qquad
 \mathcal P_{\mathbf m}(G)=\mathcal W_{\mathbf m}(G)\cap\mathcal P(G).
\]
We omit $G$ from $\mathcal W_{\mathbf m}(G)$ when the vertex set is clear.
Every word in $\mathcal W_{\mathbf m}(G)$ is a rearrangement of every
proper coloring with that profile. We call such a word an
\emph{admissible disturbance}; equivalently, its profile is feasible.
The Hamming distance and the number
of agreements are
\[
 d_{\mathrm H}(w,y)=|\{v:w(v)\ne y(v)\}|,
 \qquad \operatorname{agr}(w,y)=n-d_{\mathrm H}(w,y).
\]
The cost therefore counts changed vertices; exchanging two different
colors on two vertices has cost two.

\begin{definition}[Repair radii and villainy]\label{def:repair-radii}
The repair radius of a profile and its weak counterpart are
\begin{align*}
 R_G(\mathbf m)&=
 \max_{w\in\mathcal W_{\mathbf m}(G)}
 \min_{y\in\mathcal P_{\mathbf m}(G)}d_{\mathrm H}(w,y),\\
 R^{\mathrm w}_G(\mathbf m)&=
 \max_{w\in\mathcal W_{\mathbf m}(G)}
 \min_{y\in\mathcal P(G)}d_{\mathrm H}(w,y).
\end{align*}
With the terminology of Clark et al.~\cite{clark2006},
\[
 B(G)=\max_{\mathbf m\text{ feasible}}R_G(\mathbf m),\qquad
 B_{\mathrm w}(G)=\max_{\mathbf m\text{ feasible}}R^{\mathrm w}_G(\mathbf m).
\]
Both parameters are zero on the empty graph by convention.
\end{definition}

The inclusion
of the two target sets gives $B_{\mathrm w}(G)\le B(G)$. In particular,
the weak target may change the profile, but it still uses the same palette
of $\chi(G)$ colors. A simultaneous permutation of the color labels in
the source and target preserves distances and all profile conditions.

We write $[k]=\{1,\ldots,k\}$. For sets of vectors $A,B$, their sum is
$A+B=\{\mathbf a+\mathbf b:\mathbf a\in A,\mathbf b\in B\}$.
The graph $mG$ is the disjoint union of $m$ vertex-disjoint copies of $G$.

\section{Counterexamples to superadditivity}\label{union:section}

Clark et al.~\cite[Propositions~3.7 and~3.8]{clark2006} proved
\begin{equation}\label{union:known-weak}
 B_{\mathrm w}(G\sqcup H)\ge B_{\mathrm w}(G)+B_{\mathrm w}(H)
 \qquad\text{when }\chi(G)=\chi(H),
\end{equation}
and its ordinary analogue when, in addition, every optimal coloring
of one summand has equal-sized color classes. We show that the
unrestricted inequalities fail by an arbitrarily large amount, even
for connected summands.

\Needspace{6\baselineskip}
\begin{theorem}\label{union:weak-counterexample}
For every real $C>0$, there exists a connected graph $G$ such that
\[
 B_{\mathrm w}(G\sqcup K_3)
 <B_{\mathrm w}(G)+B_{\mathrm w}(K_3)-C.
\]
\end{theorem}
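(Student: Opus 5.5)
The plan is to take $G$ bipartite and connected, and to make the third color, which the triangle supplies, destroy the global rigidity of $2$-colorings. The first step is to compute the right-hand side. Every optimal coloring of $K_3$ uses each of the three colors once, and any rearrangement of it is again proper, so $B_{\mathrm w}(K_3)=0$. The theorem therefore reduces to finding a connected $G$ with
\[
 B_{\mathrm w}(G\sqcup K_3)<B_{\mathrm w}(G)-C .
\]
By~\eqref{union:known-weak} this cannot happen when $\chi(G)=3$. I will take $\chi(G)=2$, so that the union has palette $\{A,B,X\}$, where $X$ is the color absent from the palette of $G$. A connected bipartite graph has only two proper $2$-colorings, and they differ in every vertex. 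So a word that is locally proper but has a phase shift along a long stretch is far from both. With the color $X$ available, such a word can be repaired at the shift alone.

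The second step is to fix a parametrized family $G_s$ and prove $B_{\mathrm w}(G_s)\ge g(s)\to\infty$. Paths and even cycles do not work: a word made of two monochromatic runs costs about $n/2$ to repair with either palette. The same failure occurs for any graph with a large matching, since monochromatic matched pairs cost one change each regardless of the palette. So the family must satisfy two conditions at once:
\begin{itemize}
\item the feasible $2$-color profile leaves room for a word that is badly out of phase, giving the lower bound on $B_{\mathrm w}(G_s)$;
\item every admissible word can be repaired by recoloring a bounded set of vertices, and the natural such set is a small vertex cover.
\end{itemize}
My first candidate is a bounded-size hub $S$ whose recoloring fixes everything, with many pendant gadgets hanging off $S$. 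For $G_s$ I will try a subdivided star, or a complete bipartite graph $K_{a,b}$ with one small side, with long pendant structures attached so that the balanced profile forces phase conflict between the gadgets. The lower bound then follows from an explicit word that copies one $2$-coloring on half the gadgets and the other $2$-coloring on the rest, together with a count showing that both proper $2$-colorings are at distance at least $g(s)$.

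The third step, which I expect to be the main obstacle, is the upper bound on $B_{\mathrm w}(G_s\sqcup K_3)$. Here I must take an arbitrary word $w$ whose global profile is feasible for the union and produce a proper $3$-coloring within distance $B_{\mathrm w}(G_s)-C-1$. The difficulty is that $w$ may place $X$ on vertices of $G_s$ and may make $K_3$ nonproper, for example by repeating a color, transferring counts between the components. I would first repair $K_3$ at cost at most $2$. Then I would recolor the hub $S$ with whichever of the three colors avoids the most conflicts, and recolor the remaining conflicting gadget vertices. This needs a counting argument showing that feasibility of the global profile bounds the number of gadgets left monochromatic or conflicting after the hub is recolored. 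If the subdivided-star candidate fails this count, because monochromatic legs cost one change each, I will shrink the gadgets to single pendant vertices. I would then exploit that the feasible profiles of $G_s$ are nearly balanced in a way that forces most pendant vertices to disagree with a suitably chosen hub color. Comparing the lower bound from the second step with this upper bound and choosing $s$ large relative to $C$ finishes the proof.
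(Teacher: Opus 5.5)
Your proposal finds the right mechanism: a connected bipartite $G$ with a small vertex cover, where the third color lets the hub be recolored cheaply. But it is a search plan rather than a proof. No family $G_s$ is fixed, the lower bound $B_{\mathrm w}(G_s)\to\infty$ is never established, and the upper bound on $B_{\mathrm w}(G_s\sqcup K_3)$, which you call the main obstacle, is left as an unspecified counting argument.

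The candidates you sketch do not work as stated. Take $K_{a,b}$ with $a\le b$ and small side $L$. A source $w$ with $a$ occurrences of a color $A$ is at distance $2|w^{-1}(A)\setminus L|\le 2a$ from the proper coloring that puts $A$ on $L$, and symmetrically for the other feasible profile. So $B_{\mathrm w}(K_{a,b})\le 2a$ stays bounded when one side is small, in particular for stars, and the hub alone gives nothing. Attaching long pendant structures to rebalance reintroduces the large matchings that you yourself flag as harmful.

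The missing idea is a cover meeting \emph{both} bipartition classes, with classes of equal size, so that the two complementary proper $2$-colorings share a balanced profile. The paper takes the double star $T_s$: an edge $uv$ with $3s$ leaves at each end. Both targets have profile $(3s+1,3s+1)$. A source of that profile whose $A$-set meets one target's $A$-class in $\lfloor(3s+1)/2\rfloor$ vertices is at distance at least $2\lfloor(3s+1)/2\rfloor\ge 3s$ from each target. Hence $B_{\mathrm w}(T_s)\ge 3s$.

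For the upper bound, feasibility of the global profile is not needed at all; the paper's bound holds for every three-color word. Let $N_u(i)$ and $N_v(j)$ count leaves of color $i$ at $u$ and of color $j$ at $v$. Recoloring the centers to distinct colors $i\ne j$ and changing the conflicting leaves costs
\[
 N_u(i)+N_v(j)+\mathbf 1_{\{w(u)\ne i\}}+\mathbf 1_{\{w(v)\ne j\}}.
\]
Averaging over the six ordered pairs gives $2s+4/3$. So some repair of $T_s$ costs at most $2s+1$, and the triangle needs at most two further changes, giving $B_{\mathrm w}(T_s\sqcup K_3)\le 2s+3$. Comparing with $B_{\mathrm w}(T_s)\ge 3s$ and $B_{\mathrm w}(K_3)=0$ gives a deficit of at least $s-3$, so any integer $s>C+3$ works.

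Your fallback, shrinking to single pendant vertices and choosing a hub color that avoids conflicts, points in this direction. Without the balanced two-center hub and this pigeonhole over three colors, however, the argument is not there.
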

\begin{proof}
For a positive integer $s$, let $T_s$ consist of an edge $uv$ with
$3s$ leaves adjoined to each endpoint. Both bipartition classes have
size $3s+1$. The connected bipartite formula
\cite[Proposition~3.3]{clark2006} gives
\[
 B_{\mathrm w}(T_s)=2\left\lfloor\frac{3s+1}{2}\right\rfloor,
 \qquad B_{\mathrm w}(K_3)=0.
\]
We show that $B_{\mathrm w}(T_s\sqcup K_3)\le2s+3$.

The union has chromatic number three. For any three-color source $w$,
let $N_u(i),N_v(i)$ count the leaves of color $i$ at the two centers.
Repairing the center colors to distinct labels $i,j$ has minimum tree
cost
\[
 N_u(i)+N_v(j)
 +\mathbf1_{\{w(u)\ne i\}}+\mathbf1_{\{w(v)\ne j\}}.
\]
Its average over the six ordered pairs is $2s+4/3$. Some integer cost
is therefore at most $2s+1$. The triangle needs at most two changes,
giving the required upper bound.

It follows that
\[
 B_{\mathrm w}(T_s)+B_{\mathrm w}(K_3)
 -B_{\mathrm w}(T_s\sqcup K_3)
 \ge 2\left\lfloor\frac{3s+1}{2}\right\rfloor-(2s+3)
 \ge s-3.
\]
Choosing an integer $s>C+3$ proves the stated strict inequality.
\end{proof}

For the ordinary parameter, the chromatic numbers can be equal and
the two connected summands can even be isomorphic. We use the following
general observation about a feasible profile $\mathbf p$ on $N$ vertices.
If two proper targets of that profile have disjoint color-$X$ classes
of size $p_X$, then every source of profile $\mathbf p$ agrees with one
of them on at least
\begin{equation}\label{union:disjoint-class-bound}
 \max\left\{0,\left\lceil\frac{3p_X-N}{2}\right\rceil\right\}
\end{equation}
vertices. Indeed, the source $X$-set has size $p_X$, and its intersection
with the union of the two target $X$-sets has size at least $3p_X-N$.
One of the two intersections has at least half that size.

\begin{theorem}\label{union:strong-counterexample}
For every real $C>0$, there exists a connected graph $G$ with
$\chi(G)=3$ such that
\[
 B(G\sqcup G)<2B(G)-C.
\]
Here the union consists of two vertex-disjoint copies of $G$.
\end{theorem}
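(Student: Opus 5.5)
The plan is to imitate \cref{union:weak-counterexample}: find a connected $3$-chromatic graph $G$ whose villainy is forced up by a rigid, unbalanced feasible profile. In $G\sqcup G$ that rigidity should disappear, because the global profile has several proper realizations. These realizations exchange color roles between the two copies and so have disjoint color classes. Then \eqref{union:disjoint-class-bound}, together with an averaging argument as in \cref{union:weak-counterexample}, should bound the union's villainy well below $2B(G)$.

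\emph{Construction.} For a parameter $s$, I would take a complete bipartite graph $K_{a,b}$ with $a=3s$ and $b=s$ or similar unequal sizes. I would attach a triangle through one edge or path, so that $\chi(G)=3$ and $G$ stays connected. The third color should be essentially confined to one or two vertices near the triangle. Then every proper $3$-coloring of $G$ colors the large side almost monochromatically and the small side almost monochromatically. The feasible profiles are therefore, up to $O(1)$, $(a,b,1)$ and its permutations.

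\emph{Lower bound $B(G)\ge cs$.} Fix the feasible profile $\mathbf p=(a+O(1),b+O(1),O(1))$. Take the source that puts color $1$ on all $b$ vertices of the small side and on $a-b$ vertices of the large side, and color $2$ on the remaining $b$ large-side vertices. Any proper target of profile $\mathbf p$ must give the whole large side one color, up to $O(1)$ vertices. If that color is $1$, about $b$ large-side vertices and all $b$ small-side vertices change. If it is $2$, about $a-b$ vertices change. Since the profile is fixed, no other target is available. This gives $B(G)\ge\min\{2b,a-b\}-O(1)$, which is linear in $s$. A matching upper bound of the same order is not needed, only this lower bound.

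\emph{Upper bound on $B(G\sqcup G)$.} Now let $w$ be any source on $G\sqcup G$ with a feasible global profile. There are two cases.
\begin{itemize}
\item If the global profile is $\mathbf p+\mathbf p'$ with $\mathbf p'$ a permutation of $\mathbf p$ different from $\mathbf p$ (say colors $1$ and $2$ swapped), then the global profile has two proper realizations. One colors the large side of copy $1$ with color $1$ and the large side of copy $2$ with color $2$; the other does the reverse. Their color-$1$ classes are disjoint up to $O(1)$ vertices. Applying \eqref{union:disjoint-class-bound} to color $1$, and adding the analogous agreement count for color $2$, shows that one of these two targets agrees with $w$ on noticeably more vertices than the single-copy worst case allows.
\item If the global profile is $2\mathbf p$, the rigidity already persists within each copy. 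I would therefore tune $a$ and $b$ so that the worst single-copy profile is the asymmetric one. Alternatively, I would choose the triangle gadget so that $2\mathbf p$ is also the profile of the swapped pair, which needs an extra flexible vertex.
\end{itemize}
In either case the aim is $B(G\sqcup G)\le(2-\delta)\,c\,s+O(1)$ for a fixed $\delta>0$, and letting $s\to\infty$ then beats any $C$.

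\emph{Main obstacle.} The hard step is the upper bound for every source on the union, including sources whose restrictions to each copy are badly unbalanced, and including the case of profile $2\mathbf p$. I would first try a six-way average over the assignments of the three colors to the roles (large side, small side, triangle vertex) in each copy, subject to matching the global profile, as in \cref{union:weak-counterexample}. I would then sharpen it with \eqref{union:disjoint-class-bound}. The parameters $a,b$ would be chosen at the end to make the two linear constants differ.
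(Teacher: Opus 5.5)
\textbf{There is a genuine gap.} Your outline is the paper's outline: a rigid, unbalanced profile in $G$ whose rigidity disappears in $G\sqcup G$. The proposed graph cannot realize it, and it fails exactly at the step you left open.

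\emph{The doubled profile stays rigid.} Attaching a triangle to $K_{a,b}$ through an edge or a path does not confine the third color. With three colors, either side of $K_{a,b}$ may be two-colored, so the feasible profiles are far from being permutations of $(a,b,1)$. Approximate rigidity does hold for your particular profile $\mathbf p=(a,b,0)+O(1)$. It holds only because colors $2$ and $3$ are too scarce to fill a large side, and this scarcity survives doubling. In every proper $3$-coloring, one side of each copy of $K_{a,b}$ is monochromatic. At profile $2\mathbf p$, colors $2$ and $3$ occur only $2b+O(1)$ times, so once $a-b$ exceeds a constant depending on the gadget, each copy has its large side in color $1$ and its small side in color $2$, up to $O(1)$ vertices. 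Thus $2\mathbf p$ decomposes only as $\mathbf p+\mathbf p$. The disjoint union of two copies of your worst source is then an admissible disturbance at distance $4b-O(1)$.

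\emph{$B(G)$ is only about $2b$.} Given a feasible profile, take a realization and keep its monochromatic side. Rearrange the other two colors optimally on the other side, away from the gadget. Every same-profile source then keeps at least $a-b-O(1)$ vertices, so $B(G)\le 2b+O(1)$. Hence
\[
 2B(G)-B(G\sqcup G)\le 2\bigl(2b+O(1)\bigr)-\bigl(4b-O(1)\bigr)=O(1)
\]
for every choice of $s,a,b$ with $a-b$ large. Your fixes in the second bullet do not help. Retuning $a,b$ leaves $\mathbf p$ an extreme profile. Giving $2\mathbf p$ a second decomposition with exchanged color roles needs linearly many flexible vertices, not one.

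\emph{The disjoint-class bound is too weak here.} Even where two realizations exist, \eqref{union:disjoint-class-bound} cannot help when $B(G)\approx|V(G)|/2$. For the swapped profile in your first bullet it guarantees only about $(a+b)/2$ retained vertices out of $2(a+b)$. In fact, with $a=3s$, $b=s$, the two targets are complementary on the bipartite parts. A source splitting its agreements evenly between them is at distance about $a+b=4s$, which is about $2B(G)$. A linear number of retained vertices beats $2B(G)$ only when $B(G)$ is essentially $|V(G)|$.

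\emph{What is missing.} The paper's $J_t=\overline K_{5t}\vee(K_{t,2t}\sqcup K_{t,3t})$ has three features your graph lacks:
\begin{enumerate}
\item A join with an independent set $S$, which confines the third color to $S$.
\item Two bipartite components of different shapes, giving two genuinely different profile types $t(2,5,5)$ and $t(3,4,5)$.
\item A profile $(3t,4t,5t)$ with a unique realization and a source with no agreement, so $B(J_t)=12t=|V(J_t)|$.
\end{enumerate}
On $2J_t$ the doubled profile $(6t,8t,10t)$ is also the sum of a $(2t,5t,5t)$ profile and a $(4t,3t,5t)$ profile, and the two resulting targets have disjoint $C$-classes. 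Every unbalanced union profile has $p_C\ge9t$, so \eqref{union:disjoint-class-bound} retains $\lceil3t/2\rceil$ vertices. The balanced profile $(8t,8t,8t)$ is handled by cyclic relabeling.
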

\begin{proof}
For each positive integer $t$, take
\[
 J_t=\overline K_{5t}\vee
       \bigl(K_{t,2t}\sqcup K_{t,3t}\bigr),
\]
where the join adds every edge between its two arguments.
This is a connected graph of order $12t$ and chromatic number three.
Denote its five independent vertex sets by
$S,L_1,R_1,L_2,R_2$, of respective sizes $5t,t,2t,t,3t$.
The set $S$ is joined to all four other sets, and the remaining edges
are those between $L_i$ and $R_i$ for $i=1,2$.

In a proper three-coloring, the bipartite remainder requires two colors,
so $S$ must use only the third. Each of its two connected components
then has its two possible binary colorings. The possible unordered
profiles on $J_t$ are precisely $t(2,5,5)$ and $t(3,4,5)$.
The labeled profile $(3t,4t,5t)$ has a unique proper realization:
$C$ occupies $S$, $A$ occupies $R_1\cup L_2$, and $B$ occupies
$L_1\cup R_2$. A source with this profile and no agreement is described
by the matrix
\[
 t\begin{pmatrix}0&2&1\\0&0&4\\3&2&0\end{pmatrix},
\]
whose rows are these target classes and whose columns count the source
colors. Its row and column sums are $(3t,4t,5t)$, and its diagonal
vanishes. Hence $B(J_t)=12t$.

Adding two permutations of the possible profiles gives exactly the
following sorted profiles on $2J_t$, divided by $t$:
\[
 (4,10,10),\ (5,9,10),\ (6,8,10),\ (6,9,9),\
 (7,7,10),\ (7,8,9),\ (8,8,8).
\]
For clarity, adding two copies of $(2,5,5)$ gives the first or fifth;
mixing the two types gives the second, third, or sixth; adding two
copies of $(3,4,5)$ gives the third through seventh.
In every unbalanced case the table supplies two targets with disjoint
$C$-classes. Each five-letter word assigns colors to
$(S,L_1,R_1,L_2,R_2)$ in one copy, and the separator distinguishes
the two copies.
\begin{center}
\begin{tabular}{c@{\qquad}c@{\qquad}c}
\toprule
Profile divided by $t$ & First target & Second target\\
\midrule
$(4,10,10)$ & $BACAC\mid BACAC$ & $CABAB\mid CABAB$\\
$(5,9,10)$ & $BACAC\mid CBAAB$ & $CBAAB\mid BACAC$\\
$(6,8,10)$ & $BACAC\mid CABBA$ & $CABBA\mid BACAC$\\
$(6,9,9)$ & $BCAAC\mid CBAAB$ & $CBAAB\mid BCAAC$\\
$(7,7,10)$ & $ABCBC\mid BACAC$ & $CABAB\mid CBABA$\\
$(7,8,9)$ & $ACBBC\mid CABAB$ & $CABAB\mid ACBBC$\\
\bottomrule
\end{tabular}
\end{center}
Every word is proper: its first letter differs from the other four,
its second differs from its third, and its fourth differs from its
fifth. Weighted letter counts verify each profile. In no position
do both targets have $C$, so their $C$-classes are disjoint.

Here $N=24t$ and $p_C\ge9t$. By
\eqref{union:disjoint-class-bound}, every source of an unbalanced
profile has a repair retaining at least $\lceil3t/2\rceil$ vertices.
For the balanced profile $(8t,8t,8t)$, the three cyclic relabelings of
any proper target still have that profile and together agree once
at every vertex. Some target therefore retains at least $8t$ vertices.
Consequently
\[
 B(2J_t)\le24t-\left\lceil\frac{3t}{2}\right\rceil,
 \qquad
 2B(J_t)-B(2J_t)\ge\left\lceil\frac{3t}{2}\right\rceil.
\]
Choose an integer $t>2C/3$. Then $\lceil3t/2\rceil>C$, which gives
the stated strict inequality. The upper bound on $B(2J_t)$ is all
that is needed here; no exact value for it is asserted.
\end{proof}

\section{Profile composition and repeated unions}\label{union:composition-section}

The counterexamples in \cref{union:section} show the effect of permitting
color counts to move between components. We now retain the local profile
data and derive exact composition laws. For repeated copies, these laws
give eventual linear growth with periodic corrections under both repair
rules. Weak repair has the explicit slope $F_k(G)$; ordinary repair has
a rational slope described by a finite fractional optimization.

\subsection{Weak repair and transfer of color counts}
\label{union:weak-general}

To analyze arbitrary summands, their colorings must first be expressed
in a common palette.

\begin{definition}[Common-palette radii]\label{def:common-palette}
Fix $k\ge\chi(J)$, and let $\mathcal P_k(J)$ be
the set of proper maps from $V(J)$ to $[k]$.  These maps may leave some
labels unused.  Write
\[
 \mathcal A_k(J)=
 \left\{\mathbf a\in\mathbb Z_{\ge0}^k:
          \sum_{i=1}^k a_i=|V(J)|\right\},
 \qquad
 \mathcal M_k(J)=\{\mathbf m(y):y\in\mathcal P_k(J)\}.
\]
Thus $\mathcal A_k(J)$ contains all source profiles, whereas
$\mathcal M_k(J)$ contains the profiles realized by proper colorings.
The distinction matters even when $k=\chi(J)$: a source restricted to
one summand of a union need not have a feasible profile on that summand.

For every $\mathbf a\in\mathcal A_k(J)$, define
\[
 \rho_{J,k}(\mathbf a)=
 \max_{\substack{w:V(J)\to[k]\\\mathbf m(w)=\mathbf a}}
 \min_{y\in\mathcal P_k(J)}d_{\mathrm H}(w,y).
\]
We shall use the two maxima
\begin{equation}\label{union:palette-radii}
 \beta_k(J)=\max_{\mathbf p\in\mathcal M_k(J)}
                      \rho_{J,k}(\mathbf p),
 \qquad
 F_k(J)=\max_{\mathbf a\in\mathcal A_k(J)}
                      \rho_{J,k}(\mathbf a).
\end{equation}
The first is weak villainy with the fixed palette $[k]$; in particular,
$\beta_{\chi(J)}(J)=B_{\mathrm w}(J)$.
The second is the $k$-fixing number studied by
Garnero et al.~\cite{garnero2018}, in which the source coloring is
unrestricted.  Put
\[
 \varepsilon_k(J)=F_k(J)-\beta_k(J)\ge0.
\]
This difference measures how much is lost by requiring the source
profile to be feasible.
\end{definition}

\Needspace{6\baselineskip}
\begin{definition}[Transfer surplus]\label{def:transfer-surplus}
For an integral vector $\boldsymbol\delta$ whose coordinates sum to
zero, define the \emph{transfer surplus}
\begin{equation}\label{union:surplus-definition}
 \Delta_{J,k}(\boldsymbol\delta)=
 \max_{\substack{\mathbf p\in\mathcal M_k(J)\\
                 \mathbf p+\boldsymbol\delta\ge\mathbf0}}
       \rho_{J,k}(\mathbf p+\boldsymbol\delta)-\beta_k(J).
\end{equation}
An empty maximum has value $-\infty$.  The vector
$\boldsymbol\delta$ records a change in the local color counts relative
to a proper coloring.  Its coordinates can have both signs, although
their sum is zero because the order of the summand is unchanged.
For a union, this change must be offset by
$-\boldsymbol\delta$ on the other summand.
\end{definition}

\begin{theorem}\label{union:transfer-theorem}
Let $k\ge\max\{\chi(G),\chi(H)\}$, and put
\[
 T_k(G,H)=\max_{\boldsymbol\delta}
  \bigl(\Delta_{G,k}(\boldsymbol\delta)
       +\Delta_{H,k}(-\boldsymbol\delta)\bigr).
\]
Then
\begin{equation}\label{union:transfer-identity}
 \beta_k(G\sqcup H)=\beta_k(G)+\beta_k(H)+T_k(G,H),
\end{equation}
where
\begin{equation}\label{union:transfer-bounds}
 0\le T_k(G,H)\le\varepsilon_k(G)+\varepsilon_k(H).
\end{equation}
\end{theorem}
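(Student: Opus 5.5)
The plan is to compute $\beta_k(G\sqcup H)$ by splitting each source word into its two restrictions. The key observation is that a proper map on a disjoint union is the same as a pair of proper maps on the summands. Fix a source $w$ on $G\sqcup H$ whose profile $\mathbf m(w)$ lies in $\mathcal M_k(G\sqcup H)$. Its restrictions $w_G$ and $w_H$ then satisfy
\[
 \min_{y\in\mathcal P_k(G\sqcup H)}d_{\mathrm H}(w,y)
 =\min_{y_G\in\mathcal P_k(G)}d_{\mathrm H}(w_G,y_G)
 +\min_{y_H\in\mathcal P_k(H)}d_{\mathrm H}(w_H,y_H).
\]
This holds because weak repair places no constraint on target profiles, so the two minima decouple.

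Next I would parametrize the feasible source profiles. Every $\mathbf q\in\mathcal M_k(G\sqcup H)$ has the form $\mathbf q=\mathbf p+\mathbf p'$ with $\mathbf p\in\mathcal M_k(G)$ and $\mathbf p'\in\mathcal M_k(H)$. A source of profile $\mathbf q$ splits as a word on $G$ of profile $\mathbf p+\boldsymbol\delta$ and a word on $H$ of profile $\mathbf p'-\boldsymbol\delta$, for some integral $\boldsymbol\delta$ with coordinate sum zero. Conversely, any such pair of words, with nonnegative profiles, glues to an admissible source. Maximizing the decoupled cost over both restrictions independently for fixed $(\mathbf p,\mathbf p',\boldsymbol\delta)$ gives $\rho_{G,k}(\mathbf p+\boldsymbol\delta)+\rho_{H,k}(\mathbf p'-\boldsymbol\delta)$. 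Taking the maximum over $\mathbf p$ and $\mathbf p'$ separately for each $\boldsymbol\delta$ then yields
\[
 \beta_k(G\sqcup H)=\max_{\boldsymbol\delta}\bigl(\Delta_{G,k}(\boldsymbol\delta)+\beta_k(G)+\Delta_{H,k}(-\boldsymbol\delta)+\beta_k(H)\bigr),
\]
which is exactly \eqref{union:transfer-identity}. For the $-\infty$ convention, only those $\boldsymbol\delta$ for which both maxima are nonempty contribute, and $\boldsymbol\delta=\mathbf 0$ always does.

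The main point requiring care is this bijection between sources with feasible global profile and triples $(\mathbf p,\mathbf p',\boldsymbol\delta)$ together with restricted words. Two subtleties arise. First, the decomposition of $\mathbf q$ is not unique, so the maximum may range over redundant triples, which is harmless. Second, $\mathbf p+\boldsymbol\delta$ and $\mathbf p'-\boldsymbol\delta$ must both lie in $\mathcal A_k$; nonnegativity is built into the definition of $\Delta$, and the order condition follows from the zero coordinate sum of $\boldsymbol\delta$. Here one must also note that labels may go unused, since $k$ may exceed $\chi$. This is why $\mathcal P_k$ and $\mathcal M_k$ are used instead of $\mathcal P$.

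For the bounds \eqref{union:transfer-bounds}, taking $\boldsymbol\delta=\mathbf 0$ gives $\Delta_{G,k}(\mathbf 0)=\Delta_{H,k}(\mathbf 0)=0$, hence $T_k\ge0$. For the upper bound, each $\rho_{J,k}(\mathbf p+\boldsymbol\delta)\le F_k(J)$ by \eqref{union:palette-radii}, so $\Delta_{J,k}(\pm\boldsymbol\delta)\le F_k(J)-\beta_k(J)=\varepsilon_k(J)$. Summing these two bounds for $J=G$ and $J=H$ finishes the proof.
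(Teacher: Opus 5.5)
Your proposal is correct and follows essentially the paper's argument. You decouple the weak repairs across the two summands and parametrize admitted sources by local proper profiles together with a transfer vector $\boldsymbol\delta$. You then maximize separately over the two proper profiles for each fixed $\boldsymbol\delta$, and get the bounds from $\Delta_{J,k}(\mathbf 0)=0$ and $\rho_{J,k}\le F_k(J)$. The only cosmetic difference is that the paper first records the intermediate maximum over local source-profile pairs $(\mathbf a,\mathbf c)$ before reparametrizing, whereas you pass directly to the triples $(\mathbf p,\mathbf p',\boldsymbol\delta)$.
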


\begin{proof}
A proper coloring of $G\sqcup H$ is exactly a pair of local proper
colorings, and hence
\[
 \mathcal M_k(G\sqcup H)=\mathcal M_k(G)+\mathcal M_k(H).
\]
For fixed local source words, the two weak repairs are independent and
their costs add.  Maximizing over sources of prescribed local profiles
therefore gives
\begin{equation}\label{union:weak-convolution}
 \beta_k(G\sqcup H)=
 \max_{\substack{\mathbf a\in\mathcal A_k(G),\quad
                  \mathbf c\in\mathcal A_k(H)\\
         \mathbf a+\mathbf c\in\mathcal M_k(G)+\mathcal M_k(H)}}
  \bigl(\rho_{G,k}(\mathbf a)+\rho_{H,k}(\mathbf c)\bigr).
\end{equation}
For every admitted pair $\mathbf a,\mathbf c$, choose proper profiles
$\mathbf p,\mathbf q$ with
$\mathbf p+\mathbf q=\mathbf a+\mathbf c$.  Then
\[
 \boldsymbol\delta=\mathbf a-\mathbf p
                  =\mathbf q-\mathbf c
\]
has coordinate sum zero.  Conversely, proper profiles
$\mathbf p,\mathbf q$ and a vector $\boldsymbol\delta$ for which
$\mathbf p+\boldsymbol\delta$ and
$\mathbf q-\boldsymbol\delta$ are nonnegative determine an admitted
source pair.  For fixed $\boldsymbol\delta$, the choices of
$\mathbf p$ and $\mathbf q$ are independent.  Substitution into
\eqref{union:weak-convolution} and separate maximization over these
two profiles prove~\eqref{union:transfer-identity}.

The zero transfer satisfies $\Delta_{J,k}(\mathbf0)=0$, so
$T_k(G,H)\ge0$.  Also
$\rho_{J,k}(\mathbf p+\boldsymbol\delta)\le F_k(J)$,
which bounds each surplus by $\varepsilon_k(J)$ and proves the upper
bound in~\eqref{union:transfer-bounds}.
\end{proof}

The theorem identifies two distinct effects in the original
superadditivity question.  A common palette changes the individual
weak-villainy values to $\beta_k(G)$ and $\beta_k(H)$.  Relative to
these common-palette values, transferring color counts can only
increase the worst repair cost, and its entire contribution is
$T_k(G,H)$.

\begin{corollary}\label{union:weak-equality-general}
If $\chi(G)=\chi(H)=k$, then
\[
 B_{\mathrm w}(G\sqcup H)=B_{\mathrm w}(G)+B_{\mathrm w}(H)
\]
if and only if
\begin{equation}\label{union:weak-equality-transfers}
 \Delta_{G,k}(\boldsymbol\delta)
 +\Delta_{H,k}(-\boldsymbol\delta)\le0
 \qquad\text{for every }\boldsymbol\delta.
\end{equation}
In particular, equality holds if
$F_k(G)=B_{\mathrm w}(G)$ and $F_k(H)=B_{\mathrm w}(H)$.

For arbitrary $G,H$, set $k=\max\{\chi(G),\chi(H)\}$ and
\[
 \Gamma_k(G,H)=B_{\mathrm w}(G)+B_{\mathrm w}(H)
                   -\beta_k(G)-\beta_k(H).
\]
Then
\begin{equation}\label{union:weak-defect-general}
 B_{\mathrm w}(G\sqcup H)-B_{\mathrm w}(G)-B_{\mathrm w}(H)
 =T_k(G,H)-\Gamma_k(G,H).
\end{equation}
Consequently superadditivity holds if and only if
$T_k(G,H)\ge\Gamma_k(G,H)$, and equality holds if and only if
$T_k(G,H)=\Gamma_k(G,H)$.
\end{corollary}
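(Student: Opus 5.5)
This is a direct consequence of \cref{union:transfer-theorem}, once we note how $\beta_k$ relates to $B_{\mathrm w}$ when the palette is larger than the chromatic number. The plan is to specialize the transfer identity \eqref{union:transfer-identity} to $k=\max\{\chi(G),\chi(H)\}$ and rewrite each $\beta_k$ term in terms of weak villainy.

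First, $\chi(G\sqcup H)=\max\{\chi(G),\chi(H)\}=k$. Hence $\beta_k(G\sqcup H)=\beta_{\chi(G\sqcup H)}(G\sqcup H)=B_{\mathrm w}(G\sqcup H)$ by the remark after \eqref{union:palette-radii}. Substituting into \eqref{union:transfer-identity} gives
\[
 B_{\mathrm w}(G\sqcup H)=\beta_k(G)+\beta_k(H)+T_k(G,H).
\]
Subtracting $B_{\mathrm w}(G)+B_{\mathrm w}(H)$ from both sides yields \eqref{union:weak-defect-general} directly from the definition of $\Gamma_k(G,H)$. The superadditivity and equality characterizations then follow by reading off the sign and the vanishing of the right-hand side $T_k-\Gamma_k$.

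For the equal-chromatic-number case $\chi(G)=\chi(H)=k$, we have $\beta_k(G)=B_{\mathrm w}(G)$ and $\beta_k(H)=B_{\mathrm w}(H)$, so $\Gamma_k=0$. Then equality holds iff $T_k(G,H)=0$. Since $T_k\ge0$ by \eqref{union:transfer-bounds}, this is equivalent to $T_k\le0$. By the definition of $T_k$ as a maximum over $\boldsymbol\delta$, that is exactly \eqref{union:weak-equality-transfers}. Terms with value $-\infty$ cause no difficulty: the maximum is attained at a finite term because $\boldsymbol\delta=\mathbf0$ contributes $0$, and only finitely many $\boldsymbol\delta$ give finite surplus. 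For the sufficient condition, if $F_k(G)=B_{\mathrm w}(G)$ and $F_k(H)=B_{\mathrm w}(H)$, then $\varepsilon_k(G)=\varepsilon_k(H)=0$. The upper bound in \eqref{union:transfer-bounds} then forces $T_k=0$.

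The only point needing any care, and the main potential obstacle, is the identification $\beta_k(G\sqcup H)=B_{\mathrm w}(G\sqcup H)$. It requires that the admissible sources for $B_{\mathrm w}$ (feasible profiles on the palette of $\chi$ colors) coincide with the sources indexed by $\mathcal M_k$. It also requires that the weak targets $\mathcal P(G\sqcup H)$ coincide with $\mathcal P_k(G\sqcup H)$. Both hold because $k$ equals the chromatic number of the union, so every proper $k$-coloring uses all labels and the two definitions agree verbatim. I would state this check explicitly and otherwise treat the corollary as bookkeeping.
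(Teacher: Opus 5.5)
Your proposal is correct and follows the paper's route: you specialize the transfer identity at $k=\max\{\chi(G),\chi(H)\}=\chi(G\sqcup H)$, subtract the intrinsic weak-villainy values, and in the equal-chromatic case use $\Gamma_k=0$ together with $0\le T_k\le\varepsilon_k(G)+\varepsilon_k(H)$. Your explicit check that $\beta_k(G\sqcup H)=B_{\mathrm w}(G\sqcup H)$ spells out a step the paper's short proof uses without comment, and your treatment of the $-\infty$ terms is sound.
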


\begin{proof}
At equal chromatic numbers the palette correction is zero, and the
maximum defining $T_k$ already contains the zero transfer.  Thus
$T_k=0$ is equivalent to~\eqref{union:weak-equality-transfers}.
The sufficient condition follows from~\eqref{union:transfer-bounds}.
For unequal chromatic numbers, subtract the two intrinsic weak-villainy
values from~\eqref{union:transfer-identity}.
\end{proof}

These criteria use local profile radii and remain valid for every
graph pair.  The correction $\Gamma_k(G,H)$ is kept signed; the
argument does not require an assumption about how the feasible-source
maximum changes when additional colors are made available.

\subsection{Repeated copies and the fixing number}
\label{union:repeated-copies}

The unrestricted maximum $F_k(G)$ may be attained by a word whose
profile is not feasible.  Taking several copies of the graph permits
such a word to be balanced across the palette.  This gives an exact
relation between weak villainy and the fixing number.

\begin{theorem}\label{union:repetition-theorem}
Fix an integer $k\ge\max\{1,\chi(G)\}$, and write $m=qk+r$, where
$q\ge0$ and $0\le r<k$.  Then
\begin{equation}\label{union:repetition-error}
 0\le mF_k(G)-\beta_k(mG)
 \le r\bigl(F_k(G)-\beta_k(G)\bigr).
\end{equation}
In particular,
\begin{equation}\label{union:repetition-exact}
 \beta_k(mG)=mF_k(G)\qquad\text{whenever }k\mid m,
\end{equation}
and
\begin{equation}\label{union:repetition-limit}
 \lim_{m\to\infty}\frac{\beta_k(mG)}m=F_k(G).
\end{equation}
\end{theorem}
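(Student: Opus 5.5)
The plan is to prove the upper bound $\beta_k(mG)\le mF_k(G)$ and two lower bounds, $\beta_k(kG)\ge kF_k(G)$ and a superadditivity estimate, and then combine them. The limit \eqref{union:repetition-limit} follows at once from \eqref{union:repetition-error}, since $r<k$ keeps the error bounded, and \eqref{union:repetition-exact} is the case $r=0$.

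\emph{Upper bound.} Take a word $w$ on $mG$ with palette $[k]$, of any profile. A proper $k$-coloring of $mG$ is the same as an independent choice of a proper map $V(G)\to[k]$ on each copy, and Hamming distance adds over copies. On each copy we choose the nearest proper map. Its distance is at most $F_k(G)$, because $F_k$ is the maximum over all profiles in $\mathcal A_k(G)$. Hence $\rho_{mG,k}(\mathbf a)\le mF_k(G)$ for every $\mathbf a$, and in particular $\beta_k(mG)\le mF_k(G)$. This gives the left inequality in \eqref{union:repetition-error}.

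\emph{Cyclic balancing on $k$ copies.} Fix a word $u:V(G)\to[k]$ with $\min_{y\in\mathcal P_k(G)}d_{\mathrm H}(u,y)=F_k(G)$, and let $\sigma$ be the cyclic permutation $i\mapsto i+1 \pmod k$. On copy $j$ of $kG$, for $j=0,\dots,k-1$, place the word $\sigma^j\circ u$.
\begin{itemize}
\item Every color then appears $\sum_i m_i(u)=n$ times in total, so the profile of the resulting word is $n\mathbf 1$.
\item This profile is realized by a proper coloring: fix any $y_0\in\mathcal P_k(G)$ and place $\sigma^j\circ y_0$ on copy $j$. Hence $n\mathbf1\in\mathcal M_k(kG)$.
\item Relabeling the palette preserves properness and distances, so each copy still needs $F_k(G)$ changes. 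Since weak repair splits across copies, the total repair cost is $kF_k(G)$.
\end{itemize}
Therefore $\beta_k(kG)\ge kF_k(G)$, and with the upper bound $\beta_k(kG)=kF_k(G)$.

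\emph{Superadditivity for unions of copies.} I need $\beta_k(J\sqcup J')\ge\beta_k(J)+\beta_k(J')$ for graphs $J,J'$ with $\chi\le k$. This is the lower bound $T_k\ge0$ in \cref{union:transfer-theorem}: put the worst feasible-profile sources on the two parts. Their combined profile lies in $\mathcal M_k(J)+\mathcal M_k(J')=\mathcal M_k(J\sqcup J')$, and the weak repair costs add. Apply this to $mG=q(kG)\sqcup rG$, using the previous step for the $q$ blocks and the same superadditivity repeatedly on the $r$ single copies:
\[
\beta_k(mG)\ge qkF_k(G)+r\beta_k(G)=mF_k(G)-r\bigl(F_k(G)-\beta_k(G)\bigr).
\]
This is the right inequality in \eqref{union:repetition-error}.

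The only delicate point is the cyclic step: one must check that the balanced profile is feasible with the fixed palette $[k]$. This is where $k\ge\chi(G)$ is used, to guarantee that some $y_0\in\mathcal P_k(G)$ exists. One must also check that the weak target may be any proper map, so that unused labels cause no issue.
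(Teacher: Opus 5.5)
Your proof is correct and follows the paper's argument step for step: per-copy repair gives the upper bound, the cyclic relabelings $u,\sigma\circ u,\ldots,\sigma^{k-1}\circ u$ on $k$ copies give the feasible balanced profile $n\mathbf 1$ at cost $kF_k(G)$, and $q$ such blocks are then combined with $r$ copies carrying worst feasible sources for $\beta_k(G)$. The only cosmetic difference is that you phrase the last step as the superadditivity $T_k\ge0$ from \cref{union:transfer-theorem}, whereas the paper writes out the combined source directly.
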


\begin{proof}
Every source on $mG$ can be weakly repaired one copy at a time, each
at cost at most $F_k(G)$.  This proves the upper bound
$\beta_k(mG)\le mF_k(G)$.

Let $w$ be a word on $G$ at distance $F_k(G)$ from
$\mathcal P_k(G)$, and let $\pi$ be a cyclic permutation of all $k$
color labels.  On $k$ copies of $G$, use the sources
\[
 w,\ \pi\circ w,\ \ldots,\ \pi^{k-1}\circ w.
\]
If $G$ has $n$ vertices, their total profile is
$(n,\ldots,n)$: the $k$ copies of every vertex receive every label
once.  This profile is feasible.  Indeed, starting with any proper
$[k]$-coloring $y$ of $G$, its $k$ cyclic relabelings give a proper coloring
of $kG$ with the same total profile.
Relabeling preserves distances to the proper-coloring set, so each
of the $k$ sources still requires $F_k(G)$ changes in a weak repair.
The repairs on distinct copies are independent.  Thus this admitted
source on $kG$ requires exactly $kF_k(G)$ changes.

Repeating the construction gives $qk$ copies with cost $qkF_k(G)$.
On each of the remaining $r$ copies, use a feasible source attaining
$\beta_k(G)$.  The resulting total profile is feasible, since
every block and each remaining copy has a feasible profile.
Its weak repair cost is
\[
 qkF_k(G)+r\beta_k(G).
\]
Combining this lower bound with the upper bound proves
\eqref{union:repetition-error}.  The exact formula when $r=0$ follows
immediately, and the error in~\eqref{union:repetition-error} is bounded
independently of $m$, proving~\eqref{union:repetition-limit}.
\end{proof}

When $k=\chi(G)$, the common-palette quantities in this theorem
are the intrinsic weak-villainy values.

The error in \cref{union:repetition-error} eventually repeats. The
following result gives an explicit stabilization bound and identifies
its least period from the repetitions attaining the fixing-number bound.

\begin{theorem}[Effective weak repetition]\label{union:repetition-stabilization}
\label{union:repetition-periodic}
Let $G$ have $n$ vertices and let $k\ge\max\{2,\chi(G)\}$.
Put
\[
 F=F_k(G),\qquad \varepsilon=F-\beta_k(G),\qquad
 M=\bigl(2(k-1)n+1\bigr)^{k-1},\qquad N=(k-1)M.
\]
Define
\[
 h=\gcd\{\ell\ge1:\beta_k(\ell G)=\ell F\}.
\]
Then $h\mid k$, and there are integers $c_0,\ldots,c_{h-1}$ such that
\[
 c_0=0,\qquad 1\le c_r\le r\varepsilon\quad(1\le r<h),
 \qquad
 \beta_k(mG)=mF-c_{m\bmod h}\quad(m\ge N).
\]
The number $h$ is the least eventual period of the defect sequence
$d_m=mF-\beta_k(mG)$: it is the least positive integer for which
$d_{m+h}=d_m$ for all sufficiently large $m$. In particular, for $m\ge N$,
\[
 \beta_k(mG)=mF\quad\Longleftrightarrow\quad h\mid m.
\]
Moreover,
\[
 h=\gcd\{1\le\ell\le M:\beta_k(\ell G)=\ell F\}.
\]
The period and coefficients are computable from the feasible profiles
and the common-palette radii of $G$.
\end{theorem}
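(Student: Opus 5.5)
The plan is to turn the weak-repair formula for unions into a subadditive integer sequence and then use a zero-sum reduction. Write $d_m=mF-\beta_k(mG)$.

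\emph{Step 1 (subadditivity and the semigroup).} Taking feasible sources on $aG$ and $bG$ side by side gives a feasible source on $(a+b)G$. Weak repairs of disjoint copies are independent, so $\beta_k((a+b)G)\ge\beta_k(aG)+\beta_k(bG)$, that is, $d_{a+b}\le d_a+d_b$. The lower bound in \cref{union:repetition-theorem} gives $d_m\ge0$, and $d_1=\varepsilon$. Hence $d_r\le r\varepsilon$ for every $r$, and in particular for $1\le r<h$. The set $S=\{\ell\ge1:d_\ell=0\}$ is therefore an additive semigroup, and $k\in S$ by \eqref{union:repetition-exact}. So $h=\gcd S$ divides $k$ and $S\subseteq h\mathbb Z$. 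This already gives $d_m\ge1$ whenever $h\nmid m$. It also gives $d_{m+\ell}\le d_m$ for all $\ell\in S$.

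\emph{Step 2 (local description of $d_m$).} Iterating \eqref{union:weak-convolution} over $m$ copies, $d_m$ is the minimum of $\sum_i\bigl(F-\rho_{G,k}(\mathbf a_i)\bigr)$ over source profiles $\mathbf a_1,\dots,\mathbf a_m\in\mathcal A_k(G)$ for which there are proper profiles $\mathbf p_i\in\mathcal M_k(G)$ with $\sum_i(\mathbf a_i-\mathbf p_i)=\mathbf0$. Call copy $i$ \emph{good} if $\rho_{G,k}(\mathbf a_i)=F$. Every bad copy contributes at least $1$. Since $d_m\le(k-1)\varepsilon$ by \eqref{union:repetition-error}, an optimal configuration has at most $(k-1)\varepsilon$ bad copies.

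\emph{Step 3 (reduction lemma, the main obstacle).} Each deviation $\boldsymbol\delta_i=\mathbf a_i-\mathbf p_i$ has coordinate sum zero, so it is determined by $k-1$ coordinates, each in $[-n,n]$. I want to show: if there are at least $M$ good copies, some nonempty set of at most $M$ of them has zero total deviation. First I would try ordering the good copies so that partial sums stay in a box of side $2(k-1)n+1$, using a Steinitz-type greedy rearrangement. The hard part is that the deviations of the good copies need not sum to zero by themselves, since the bad copies absorb the remainder. To handle this, I would reserve the bounded bad part and use the fact that partial sums lie in a box with at most $M$ lattice points. Then two partial sums coincide, and the copies between them form a zero-sum block of length $\ell\le M$ consisting of good copies. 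Such a block satisfies $\ell\in S$. Removing it leaves a valid configuration on $(m-\ell)G$ with the same cost, so $d_{m-\ell}\le d_m$. Combined with Step 1, $d_{m-\ell}=d_m$. I expect the exact constant $M$ to come from this box count, and making the greedy ordering precise with the bad copies present is the step I expect to need most care.

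\emph{Step 4 (periodicity and computation).} If $m\ge N=(k-1)M$, then the configuration has at least $m-(k-1)\varepsilon$ good copies. I still need to check that this is at least $M$ with the stated $N$; failing that, I would enlarge $N$ slightly, or argue through blocks. Repeated reduction gives $d_m=d_{m'}$ for some $m'<N$ with $m\equiv m'$ modulo elements of $S_{\le M}=\{\ell\in S:\ell\le M\}$. Let $h'=\gcd S_{\le M}$. Reduction applied to any $\ell\in S$ shows $\ell$ lies in the semigroup generated by $S_{\le M}$, so $h'=h$. By a Frobenius-type bound, all large multiples of $h$ lie in $S$. Together with monotonicity along $S$ and the reduction, this shows that $d_m$ depends only on $m\bmod h$ for $m\ge N$. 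It also shows that $c_r$ equals the eventual minimum over that residue class. Then $c_0=0$ and $1\le c_r\le r\varepsilon$ follow from Step 1. Finally, $h$ is the least eventual period: any eventual period $p$ must carry the zeros in residue class $0$ to zeros, while $d_m\ge1$ whenever $h\nmid m$, which forces $h\mid p$. Computability follows because everything reduces to the finitely many values $d_\ell$ with $\ell\le N$, each of which is a finite optimization over profiles.
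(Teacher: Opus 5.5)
Your Steps~1 and~2, the identification $h=\gcd S_{\le M}$ (a zero-cost configuration has no bad copies, so Steinitz applies to it directly), and the least-period argument are sound. The gap is the threshold $N=(k-1)M$, which Steps~3--4 do not reach.

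The reduction lemma in Step~3 does not follow from what you use. Pigeonhole on a Steinitz ordering of the whole zero-sum configuration guarantees an all-good zero-sum segment only inside a run of at least $M$ consecutive good copies. Up to $(k-1)\varepsilon$ bad copies can cut the good copies into many shorter runs. For instance, take $k=2$ (so $M=2n+1$) and $\varepsilon\ge3$. A multiset of $3n\ge M$ good deviations equal to $+1$ and three bad deviations equal to $-n$ satisfies every hypothesis you invoke. Yet no nonempty set of good copies sums to zero. The run argument only gives a threshold like $((k-1)\varepsilon+1)M$, which depends on $\varepsilon$. Your own count in Step~4 already fails at $m=N$ when $k=2$ and $\varepsilon\ge1$, and enlarging $N$ proves a weaker theorem.

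Step~4 has a second leak. Removing zero-cost blocks of arbitrary lengths $\ell\in S$ changes the residue modulo $k$. Comparing with a residue-class minimum $m^*$ through $d_{m+\ell}\le d_m$ then needs $m-m^*\in S$, which an unquantified Frobenius bound supplies only beyond a further offset. In particular, $\beta_k(mG)=mF$ for every multiple $m\ge N$ of $h$ is not established.

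The missing idea is to work modulo $k$ with the entire configuration, bad copies included, rather than separating good copies from bad ones.

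\begin{enumerate}
\item Decompose every zero-sum multiset of local pairs into minimal zero-sum blocks. Each block has length at most $M$ by your Steinitz count, because a block itself sums to zero.
\item Among configurations of length congruent to $r$ modulo $k$, a cheapest one can be chosen with at most $k-1$ blocks. If two partial sums of block lengths agree modulo $k$, delete the blocks between them; this keeps the residue and cannot increase the nonnegative cost.
\item Hence each residue class has a minimizer of length at most $(k-1)M=N$. Padding with the zero-cost cyclic-relabeling multiset of length $k$ yields $d_m=a_{m\bmod k}$ for all $m\ge N$, where $a_r$ is the minimum over the class.
\item The period $h$ comes from the subgroup of $\mathbb Z/k\mathbb Z$ generated by the lengths of zero-cost blocks. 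Here $a_r=0$ exactly on that subgroup, and $(a_r)$ is constant on its cosets.
\end{enumerate}

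This is the paper's shortest-path argument. It needs no Frobenius bound, and the threshold is independent of $\varepsilon$.
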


\begin{proof}
Write $d=k-1$. We use the Steinitz lemma in the following standard
form: vectors in $\mathbb R^d$ of $\ell_\infty$ norm at most $n$
and with sum zero can be ordered so that every partial sum has
$\ell_\infty$ norm at most $dn$; see \cite{eisenbrandweismantel2018}.
Consequently a nonempty zero-sum multiset of integer vectors of norm
at most $n$ that has no proper nonempty zero-sum submultiset contains
at most $(2dn+1)^d=M$ vectors. Indeed, for a multiset of length $L$,
the partial sums $\mathbf s_0=\mathbf0,\mathbf s_1,\ldots,\mathbf s_{L-1}$
in the Steinitz ordering
are distinct lattice points in $[-dn,dn]^d$.
An equality between two of these sums would produce
a proper nonempty zero-sum submultiset.

To each pair
$(\mathbf a,\mathbf p)\in\mathcal A_k(G)\times\mathcal M_k(G)$,
associate the vector
\[
 \boldsymbol\delta=(a_1-p_1,\ldots,a_d-p_d)
\]
and nonnegative integer cost $F-\rho_{G,k}(\mathbf a)$.
The omitted coordinate of $\mathbf a-\mathbf p$ is determined by
the first $d$, because its coordinate sum is zero.
Thus a multiset of $m$ local pairs has vector sum zero precisely
when its source profiles sum to its proper profiles. Such a choice
is exactly a feasible global source profile on $mG$.
For fixed local source profiles, the maximum weak repair cost is
the sum of the local radii, since sources can be chosen independently
and weak repairs on different copies are independent. It follows that
\[
 d_m:=mF-\beta_k(mG)
\]
is the minimum total cost of a zero-sum multiset of $m$ local pairs.
The length of a multiset is its number of pairs, counted with
multiplicity, and its cost is the sum of their costs.
We also allow the empty multiset and put $d_0=0$.

Call a nonempty zero-sum multiset a block if it has no proper
nonempty zero-sum submultiset. Every zero-sum multiset is a disjoint
union of blocks, by successively taking a minimal nonempty zero-sum
submultiset. Each block has length at most $M$, by the preceding
Steinitz consequence. There are only finitely many blocks because
the set of local pairs is finite.

Consider the directed graph with vertex set $\mathbb Z/k\mathbb Z$.
For every block of length $\ell$ and cost $b$, place an edge from
each residue $u$ to $u+\ell$, with weight $b$.
A path starting at zero represents a concatenation of blocks, so
the shortest-path distance $a_r$ from zero to $r$ is
\[
 a_r=\min_{m\ge0,\ m\equiv r\pmod k}d_m.
\]
Every residue is reachable: a local pair with $\mathbf a=\mathbf p$
is itself a block of length one. Edge weights are nonnegative, so
a shortest path can be chosen without repeated vertices. It has
at most $k-1$ edges and therefore represents a witness of length
$m_r\le(k-1)M=N$ and cost $a_r$.

The cyclic relabeling construction in
\cref{union:repetition-theorem} gives a zero-cost multiset of length
$k$. For any $m\ge N$ with $m\equiv r\pmod k$, append
$(m-m_r)/k$ copies of that multiset to the witness above. This gives
$d_m\le a_r$, while the definition of $a_r$ gives the reverse
inequality. Hence
\begin{equation}\label{union:explicit-periodic-defect}
 d_m=a_{m\bmod k}\qquad(m\ge N).
\end{equation}
The original error bound also gives $a_0=0$ and
$a_r\le r\varepsilon$ for $0\le r<k$.

To identify the least period, let $H$ be the subgroup of
$\mathbb Z/k\mathbb Z$ generated by the lengths of zero-cost
blocks. Nonnegative edge weights imply that $a_r=0$ exactly when
$r$ is reachable from zero using only zero-cost edges. In a finite
group, the semigroup generated by given residues is their subgroup;
thus $a_r=0$ exactly when $r\in H$.
Appending a zero-cost path shows that $a_{r+u}\le a_r$ for $u\in H$.
Applying the same argument with $-u\in H$ proves equality.
Consequently $(a_r)$ is constant on the cosets of $H$.

The zero-cost multiset of length $k$ decomposes into zero-cost blocks,
because all costs are nonnegative. Hence the gcd of the lengths of
zero-cost blocks divides $k$. Every zero-cost multiset also decomposes
into zero-cost blocks. Its possible lengths are therefore precisely
the additive semigroup generated by those block lengths, that is,
their nonnegative integer combinations, and their
gcd is the number $h$ in the statement. Thus $H=h\mathbb Z/k\mathbb Z$.
The sequence $(a_r)$ has period $h$. Any period $t$ must send its
zero entry $a_0$ to another zero entry, so $t\bmod k\in H$; hence $h$ is
its least positive period. By \eqref{union:explicit-periodic-defect},
it is also the least eventual period of $(d_m)$.

Set $c_r=a_r$ for $0\le r<h$. These coefficients satisfy the stated
bounds, and $c_r>0$ when $r\ne0$ because such a residue does not
belong to $H$. Every zero-cost block has length at most $M$ and is
itself a zero-cost multiset. Conversely, every zero-cost multiset
of length at most $M$ decomposes into zero-cost blocks. The two
collections of lengths have the same gcd, proving the finite
formula for $h$.

Finally, all local pairs, their costs, and all blocks of length at
most $M$ can be enumerated from the displayed local data. The
shortest-path construction then computes the coefficients, and
the zero-cost block lengths compute $h$. This is a finite
procedure; no polynomial bound on the enumeration is asserted.
\end{proof}

When $k=1$, the graph is edgeless and $F_1(G)=\beta_1(mG)=0$.


\begin{corollary}\label{union:all-repetition-equality}
For a graph $G$ with $\chi(G)=k$, the following are equivalent:
\begin{enumerate}
\item $B_{\mathrm w}(G)=F_k(G)$;
\item $B_{\mathrm w}(mG)=mB_{\mathrm w}(G)$ for every positive integer $m$;
\item $B_{\mathrm w}(kG)=kB_{\mathrm w}(G)$.
\end{enumerate}
\end{corollary}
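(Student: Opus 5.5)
The plan is to prove the cycle (1)$\Rightarrow$(2)$\Rightarrow$(3)$\Rightarrow$(1), working with the palette $k=\chi(G)$. With this choice the common-palette quantities of \cref{def:common-palette} are the intrinsic ones: $\beta_k(J)=B_{\mathrm w}(J)$ for every graph $J$ with $\chi(J)=k$. This applies in particular to every union $mG$ with $m\ge1$, since $\chi(mG)=\chi(G)=k$. All three implications then follow from \cref{union:repetition-theorem}, and no new combinatorics should be required.

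For (1)$\Rightarrow$(2), assume $B_{\mathrm w}(G)=F_k(G)$, so that $F_k(G)-\beta_k(G)=0$. Write $m=qk+r$ with $0\le r<k$. The error bound \eqref{union:repetition-error} then pinches to
\[
0\le mF_k(G)-\beta_k(mG)\le 0 .
\]
Hence $B_{\mathrm w}(mG)=\beta_k(mG)=mF_k(G)=mB_{\mathrm w}(G)$ for every positive integer $m$. The implication (2)$\Rightarrow$(3) is the special case $m=k$.

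For (3)$\Rightarrow$(1), the exact formula \eqref{union:repetition-exact} with $m=k$ gives $B_{\mathrm w}(kG)=\beta_k(kG)=kF_k(G)$. Combining this with (3) yields $kB_{\mathrm w}(G)=kF_k(G)$, and dividing by $k\ge1$ gives $B_{\mathrm w}(G)=F_k(G)$.

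The only delicate point is to confirm that the fixed-palette value $\beta_k(mG)$ coincides with the intrinsic weak villainy $B_{\mathrm w}(mG)$. This holds because $\chi(mG)=\chi(G)$ and the paper records $\beta_{\chi(J)}(J)=B_{\mathrm w}(J)$. The degenerate case $k=1$ is trivial: all quantities vanish, consistent with the remark after \cref{union:repetition-stabilization}.
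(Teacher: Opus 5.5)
Your proposal is correct and follows essentially the same route as the paper: (1)$\Rightarrow$(2) because the error bound in \cref{union:repetition-theorem} collapses to zero, (2)$\Rightarrow$(3) as the case $m=k$, and (3)$\Rightarrow$(1) from \eqref{union:repetition-exact} at $m=k$. Your explicit check that $\beta_k(mG)=B_{\mathrm w}(mG)$ because $\chi(mG)=k$ is a step the paper leaves implicit.
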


\begin{proof}
If the first equality holds, the error bound in
\cref{union:repetition-theorem} is zero for every $m$.
The second statement includes the third.  Finally,
\eqref{union:repetition-exact} with $m=k$ shows that the third implies
the first.
\end{proof}

Thus additivity over every repetition is equivalent to a property of
a single graph: an unrestricted worst source can be chosen with a
feasible profile.

\subsection{Count-preserving repairs and response functions}

For ordinary villainy, the target profiles on the summands are coupled
by their prescribed sum. We retain the common palette of $k$ colors
and the profile sets $\mathcal A_k(J),\mathcal M_k(J)$ introduced
in \cref{def:common-palette}. Local source profiles range
over $\mathcal A_k(J)$, even when they are not proper profiles.

\begin{definition}[Repair responses]\label{def:responses}
The \emph{response function} of a word $w:V(J)\to[k]$ of profile
$\mathbf a$ is
\[
 f_w(\mathbf p)=
 \min_{\substack{y\in\mathcal P_k(J)\\\mathbf m(y)=\mathbf p}}
       d_{\mathrm H}(w,y)
 \qquad(\mathbf p\in\mathcal M_k(J)).
\]
Here all colorings and words use the common palette; a proper coloring
of a summand need not use every color. Extend $f_w$ by $+\infty$
outside $\mathcal M_k(J)$. Write
\[
 \mathcal R_k(J,\mathbf a)
 =\operatorname{Max}\{f_w:\mathbf m(w)=\mathbf a\},
\]
where $\operatorname{Max}$ retains the coordinatewise maximal
functions and removes duplicates. Thus a response is discarded if
another realizable response is at least as large at every target
profile and larger at some profile. The resulting family is an
\emph{antichain}: no two distinct remaining responses are comparable
coordinatewise. It records the possible tradeoffs between target
profiles without retaining the source words themselves.

For two response functions define their min-plus convolution by
\[
 (f\star g)(\mathbf p)
 =\min_{\mathbf u+\mathbf v=\mathbf p}
       \bigl(f(\mathbf u)+g(\mathbf v)\bigr).
\]
The profiles where its value is finite are exactly
$\mathcal M_k(G)+\mathcal M_k(H)$ when $f,g$ are responses on
$G,H$, respectively. For sets of responses, put
$\mathcal U\star\mathcal V=\{f\star g:f\in\mathcal U,
g\in\mathcal V\}$.
\end{definition}

\Needspace{6\baselineskip}
\begin{theorem}\label{union:ordinary-composition}
For every $k\ge\max\{\chi(G),\chi(H)\}$ and every source profile
$\mathbf a$ on $G\sqcup H$,
\begin{equation}\label{union:frontier-composition}
\mathcal R_k(G\sqcup H,\mathbf a)
=\operatorname{Max}
\bigcup_{\boldsymbol\alpha+\boldsymbol\beta=\mathbf a}
\bigl(\mathcal R_k(G,\boldsymbol\alpha)
      \star\mathcal R_k(H,\boldsymbol\beta)\bigr),
\end{equation}
where the union ranges over
$\boldsymbol\alpha\in\mathcal A_k(G)$ and
$\boldsymbol\beta\in\mathcal A_k(H)$.
This composition is associative and commutative.
\end{theorem}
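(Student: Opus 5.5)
The plan is to prove that the set of realizable responses on $G\sqcup H$ equals $\operatorname{Max}$ of the set of convolutions of realizable local responses. Once that is done, the identity follows from an elementary property of $\operatorname{Max}$.

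\textbf{Step 1: a single word.} For a word $w=(w_G,w_H)$ on $G\sqcup H$, I will show that $f_w=f_{w_G}\star f_{w_H}$. A proper $[k]$-coloring $y$ of the union is exactly a pair $(y_G,y_H)$ of local proper colorings. Its profile is $\mathbf m(y_G)+\mathbf m(y_H)$, and
\[
 d_{\mathrm H}(w,y)=d_{\mathrm H}(w_G,y_G)+d_{\mathrm H}(w_H,y_H).
\]
To minimize over $y$ of profile $\mathbf p$, first choose the split $\mathbf u+\mathbf v=\mathbf p$ with $\mathbf u\in\mathcal M_k(G)$ and $\mathbf v\in\mathcal M_k(H)$. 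Then minimize independently on each side. Splits outside the local feasible sets contribute $+\infty$, which matches the $+\infty$ extension. This gives the convolution formula.

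\textbf{Step 2: realizable sets.} Let $S(J,\mathbf a)=\{f_w:\mathbf m(w)=\mathbf a\}$. As $w$ ranges over words of profile $\mathbf a$, the pair $(w_G,w_H)$ ranges over all pairs whose local profiles $\boldsymbol\alpha,\boldsymbol\beta$ satisfy $\boldsymbol\alpha+\boldsymbol\beta=\mathbf a$. Combined with Step 1, this gives
\[
 S(G\sqcup H,\mathbf a)=\bigcup_{\boldsymbol\alpha+\boldsymbol\beta=\mathbf a} S(G,\boldsymbol\alpha)\star S(H,\boldsymbol\beta).
\]

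\textbf{Step 3: passing to $\operatorname{Max}$.} It remains to replace each $S$ on the right by its maximal part $\mathcal R_k$ and take $\operatorname{Max}$. Since all sets are finite, $\operatorname{Max}X$ is dominated-equivalent to $X$: every element of $X$ lies below some element of $\operatorname{Max}X$. Also $\operatorname{Max}X=\operatorname{Max}Y$ whenever $\operatorname{Max}X\subseteq Y\subseteq X$. The key monotonicity is that $f\le f'$ and $g\le g'$ imply $f\star g\le f'\star g'$, because a minimum of sums is monotone in each term. Put $Y=\bigcup \mathcal R_k(G,\boldsymbol\alpha)\star\mathcal R_k(H,\boldsymbol\beta)$. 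Then $Y\subseteq X=S(G\sqcup H,\mathbf a)$. Every element $f\star g$ of $X$ is dominated by some $f'\star g'\in Y$. Hence every maximal element of $X$ equals an element of $Y$, because it dominates that element and is also dominated by it in the order $\le$. So $\operatorname{Max}X\subseteq Y$, and therefore $\operatorname{Max}X=\operatorname{Max}Y$.

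\textbf{Step 4: associativity and commutativity.} Commutativity of $\star$ is immediate from the symmetry of the splits. For associativity, both $(f\star g)\star h$ and $f\star(g\star h)$ equal $\min_{\mathbf u+\mathbf v+\mathbf x=\mathbf p}(f(\mathbf u)+g(\mathbf v)+h(\mathbf x))$. For the composition of response families, I will argue that both groupings of three summands produce $\mathcal R_k(G\sqcup H\sqcup K,\mathbf a)$ by the theorem itself, since disjoint union is associative and commutative. The alternative is to repeat the domination argument of Step 3 with triple splits.

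I expect the only delicate point to be Step 3. There, care is needed to show that discarding dominated local responses loses no maximal global response. The monotonicity of $\star$ and the finiteness of all sets handle this.
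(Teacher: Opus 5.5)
Your proposal is correct and follows the paper's proof. The response of a split word is $f_{w_G}\star f_{w_H}$, choosing the two local words independently gives the converse, and monotonicity of $\star$ shows that pruning dominated local responses leaves the maximal union responses unchanged; your Step~3 spells out the paper's one-line pruning argument, and invoking the theorem on the triple union for associativity is a valid substitute for the paper's remark that the same monotonicity permits pruning at every intermediate composition.
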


\begin{proof}
A source word on the union restricts to words $w,z$ on the two
summands. At a fixed target profile $\mathbf p$, any proper repair
has local target profiles $\mathbf u,\mathbf v$ with
$\mathbf u+\mathbf v=\mathbf p$. Once these profiles are fixed,
the two repairs are independent and their distances add. The union
response is therefore exactly $f_w\star f_z$. Conversely, any pair
of local response functions can be realized simultaneously by choosing
its two source words independently.

This proves~\eqref{union:frontier-composition} before dominated
responses are removed. Convolution is monotone in each argument:
if $f\le f'$ and $g\le g'$, then $f\star g\le f'\star g'$.
Replacing a discarded local response by a maximal one can consequently
only increase the resulting response. Pruning the local lists thus
leaves the union's maximal responses unchanged. Associativity and
commutativity follow from those of min-plus convolution, and the same
monotonicity permits pruning at every intermediate composition.
\end{proof}

This theorem is a reduction to finite local data; it does not assert
that the response antichains are always small. When $k=\chi(J)$,
ordinary villainy is recovered from their diagonal entries:
\begin{equation}\label{union:response-diagonal}
 B(J)=\max_{\mathbf a\in\mathcal M_k(J)}
       \max_{f\in\mathcal R_k(J,\mathbf a)}f(\mathbf a).
\end{equation}

The antichain is a singleton whenever one source attains every
coordinatewise maximum simultaneously. The common extremizer in
\cite[Theorem~3.15]{jiang2026exact} gives
\[
\begin{aligned}
 \mathcal R_3(C_{2r+1},\mathbf a)&=\{f_{\mathbf a}\},\\
 f_{\mathbf a}(\mathbf p)&=2r+1-
 \sum_{X\in[3]}\min\{p_X,\lceil a_X/2\rceil\}
 \qquad(\mathbf p\in\mathcal M_3(C_{2r+1}))
\end{aligned}
\]
for every source profile $\mathbf a$, including infeasible ones:
the same cyclic word, with all occurrences of each color consecutive,
attains the displayed value at every target profile.
Thus these directed radii supply the complete local response
data needed for composition.

In general, however, it is essential to retain the antichain, rather than merely the largest
value in each coordinate. For example, on $P_3=K_{1,2}$ with two
colors, order the target profiles by their $A$-counts $1,2$. Sources
with one $A$ have responses $(0,3)$ and $(2,1)$, according as $A$
occupies the center or a leaf. Sources with two $A$'s have responses
$(3,0)$ and $(1,2)$. For two copies and the source split $1+2$,
the greatest convolution value at total $A$-count three is two.
Taking coordinatewise maxima first instead gives
\[
 \min\{2+2,3+3\}=4.
\]
Indeed, the actual radius of profile $(3,3)$ on $2P_3$ is two:
its two proper targets are complementary, their distances from a
same-profile source are even and sum to six, and a source constant
on each copy attains two. Thus the outer maximum over disturbances
cannot in general be interchanged with the minimum over target profiles.

The response functions also give an exact criterion for equality
under disjoint union. For an integer $j$, put
\[
 \mathcal L_f(j)=\{\mathbf p:f(\mathbf p)\le j\},
 \qquad
 \mathcal K_L(f,g)=
 \bigcup_{j=0}^{L}
 \bigl(\mathcal L_f(j)+\mathcal L_g(L-j)\bigr),
\]
with $\mathcal L_f(j)=\varnothing$ for $j<0$ and
$\mathcal K_{-1}(f,g)=\varnothing$.

\begin{proposition}\label{union:ordinary-equality}
Set $k=\chi(G\sqcup H)$ and let $L\ge0$ be an integer.
Then $B(G\sqcup H)\le L$ if and only if
\begin{equation}\label{union:budget-cover}
 \boldsymbol\alpha+\boldsymbol\beta\in\mathcal K_L(f,g)
\end{equation}
for every
$f\in\mathcal R_k(G,\boldsymbol\alpha)$ and
$g\in\mathcal R_k(H,\boldsymbol\beta)$ whose summed source profile
belongs to $\mathcal M_k(G)+\mathcal M_k(H)$.
In particular, for $L=B(G)+B(H)$, equality
$B(G\sqcup H)=B(G)+B(H)$ holds if and only if this condition holds
for every such pair at budget $L$, and fails for at least one such
pair at budget $L-1$.
\end{proposition}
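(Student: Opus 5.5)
We reduce the statement to \cref{union:ordinary-composition} together with the diagonal formula \eqref{union:response-diagonal}, applied on the union with $k=\chi(G\sqcup H)=\max\{\chi(G),\chi(H)\}$. Since $|\mathcal C|=k=\chi(G\sqcup H)$, ordinary villainy of the union is
\[
 B(G\sqcup H)=\max_{\mathbf a\in\mathcal M_k(G\sqcup H)}\ \max_{F\in\mathcal R_k(G\sqcup H,\mathbf a)}F(\mathbf a).
\]
The profile set of the union satisfies $\mathcal M_k(G\sqcup H)=\mathcal M_k(G)+\mathcal M_k(H)$, as noted in the proof of \cref{union:transfer-theorem}. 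Now apply \eqref{union:frontier-composition}. Every response in $\mathcal R_k(G\sqcup H,\mathbf a)$ has the form $f\star g$ with $f\in\mathcal R_k(G,\boldsymbol\alpha)$, $g\in\mathcal R_k(H,\boldsymbol\beta)$ and $\boldsymbol\alpha+\boldsymbol\beta=\mathbf a$. Conversely, every such $f\star g$ is bounded above coordinatewise by some retained maximal response. Hence
\[
 B(G\sqcup H)=\max\bigl\{(f\star g)(\boldsymbol\alpha+\boldsymbol\beta)\bigr\},
\]
where the maximum ranges over the pairs described in the proposition, that is, those whose summed source profile lies in $\mathcal M_k(G)+\mathcal M_k(H)$. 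For the unpruned version of this identity, we use the exact realizability part of the proof of \cref{union:ordinary-composition}: any pair of source words gives the union response $f_w\star f_z$.

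Therefore $B(G\sqcup H)\le L$ holds if and only if $(f\star g)(\mathbf p)\le L$ for each admitted pair, where $\mathbf p=\boldsymbol\alpha+\boldsymbol\beta$. The remaining step is a pointwise translation of min-plus convolution into sublevel sets. We have $(f\star g)(\mathbf p)\le L$ if and only if there exist $\mathbf u+\mathbf v=\mathbf p$ with $f(\mathbf u)+g(\mathbf v)\le L$.

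In the forward direction, take such $\mathbf u,\mathbf v$. Both values are finite nonnegative integers, since responses are Hamming distances or $+\infty$. Put $j=f(\mathbf u)\in[0,L]$. Then $\mathbf u\in\mathcal L_f(j)$, and $g(\mathbf v)\le L-j$ gives $\mathbf v\in\mathcal L_g(L-j)$. Hence $\mathbf p\in\mathcal K_L(f,g)$.

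In the reverse direction, membership in $\mathcal L_f(j)+\mathcal L_g(L-j)$ supplies $\mathbf u,\mathbf v$ with $f(\mathbf u)+g(\mathbf v)\le L$. This proves the equivalence \eqref{union:budget-cover}.

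For the equality statement, we use that $B(G\sqcup H)$ is an integer. Thus $B(G\sqcup H)=L$ holds if and only if $B\le L$ and not $B\le L-1$. Applying the criterion at budgets $L$ and $L-1$ gives the claim with $L=B(G)+B(H)$. At $L-1=-1$ the convention $\mathcal K_{-1}=\varnothing$ makes the condition fail automatically for any admitted pair, and admitted pairs exist. This is consistent with $B\ge0$.

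The main obstacle is checking carefully that pruning to the antichains loses nothing for this diagonal evaluation. The maximum over all realizable $f_w\star f_z$ must equal the maximum over products of maximal responses. One direction uses the monotonicity of $\star$ noted in the proof of \cref{union:ordinary-composition}, which shows that every response is dominated by a product of maximal ones. The other direction uses realizability of every pair of local source words. A secondary point is that the palette is the common palette of size $k=\chi(G\sqcup H)$, so local colorings may omit colors, while the diagonal formula is applied only on the union. There the chromatic number equals $k$, so the hypothesis of \eqref{union:response-diagonal} is met.
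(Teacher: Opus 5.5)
Your proposal is correct and follows the same route as the paper. You combine \cref{union:ordinary-composition} with the diagonal formula \eqref{union:response-diagonal} to reduce $B(G\sqcup H)\le L$ to the pointwise condition $(f\star g)(\boldsymbol\alpha+\boldsymbol\beta)\le L$ on admitted pairs, unpack it into membership in $\mathcal K_L(f,g)$ using integrality of costs, and read off equality from budgets $L$ and $L-1$; your remarks on domination by maximal responses and on $\mathcal K_{-1}(f,g)=\varnothing$ just spell out steps the paper leaves implicit.
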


\begin{proof}
For a fixed pair of responses,
$(f\star g)(\mathbf m)\le L$ means that
$\mathbf m=\mathbf p+\mathbf q$ for profiles satisfying
$f(\mathbf p)+g(\mathbf q)\le L$. Since costs are integers, this
is equivalent to $\mathbf m\in\mathcal K_L(f,g)$.
\cref{union:ordinary-composition} and
\eqref{union:response-diagonal} now give the asserted upper-bound
criterion. Failure at $L-1$, together with the bound at $L$, is
equivalent to equality. The argument does not require equal chromatic
numbers for the summands.
\end{proof}

This is an exact compositional criterion in local profile data, rather
than an elementary structural classification of the graph pairs.
It distinguishes two effects: target multiplicities may transfer
between summands during repair, and a globally feasible disturbance
may have locally infeasible profiles.

\subsection{The growth of ordinary villainy under repetition}
\label{union:ordinary-repetition-section}

The response composition law also determines the asymptotic behavior
of ordinary villainy. Fix a graph $G$ with $\chi(G)=k$, and let
$\mathcal W$ and $\mathcal P$ be, respectively, all words and all
proper colorings of $G$ on $[k]$. Both sets are finite. A vector
$\mathbf x=(x_w)_{w\in\mathcal W}$ describes proportions of source
words on many copies of $G$. Its average profile must belong to the
convex hull of the proper profiles. Accordingly, put
\[
 \mathcal X_G=\left\{\mathbf x\ge\mathbf0:
   \sum_wx_w=1,\quad
   \sum_wx_w\mathbf m(w)\in\operatorname{conv}\mathcal M_k(G)
                 \right\}.
\]
For $\mathbf x\in\mathcal X_G$, define the fractional repair cost
\begin{equation}\label{union:fractional-response}
\begin{aligned}
 \Phi_G(\mathbf x)=\min_{\mathbf z\ge\mathbf0}\quad&
   \sum_{w\in\mathcal W,\ y\in\mathcal P}d_{\mathrm H}(w,y)z_{wy},\\
 \text{subject to}\quad&\sum_yz_{wy}=x_w\quad(w\in\mathcal W),\\
 &\sum_{w,y}z_{wy}\bigl(\mathbf m(y)-\mathbf m(w)\bigr)=\mathbf0.
\end{aligned}
\end{equation}
The variable $z_{wy}$ assigns a proportion of copies with source $w$
to the proper target $y$. Thus the second constraint preserves the
total color counts. The program is feasible precisely for
$\mathbf x\in\mathcal X_G$. Set
\begin{equation}\label{union:ordinary-slope}
 \Lambda(G)=\max_{\mathbf x\in\mathcal X_G}\Phi_G(\mathbf x).
\end{equation}

\begin{theorem}[Ordinary repetition]\label{union:ordinary-repetition}
For every fixed graph $G$ with $k=\chi(G)$, the number $\Lambda(G)$
is rational and effectively computable. The maximum
in~\eqref{union:ordinary-slope} is attained by a distribution supported
on at most $k$ source words, meaning that at most $k$ coordinates
$x_w$ are positive. Moreover,
\begin{equation}\label{union:ordinary-linear-growth}
B(mG)=m\Lambda(G)+O_G(1),\qquad \Lambda(G)\ge F_k(G).
\end{equation}
The error term is bounded by a constant depending only on $G$.
There are effectively computable integers $h\ge1$, $a\ge0$, and
$M\ge1$ such that $a=h\Lambda(G)$ and
\begin{equation}\label{union:ordinary-eventual-recurrence}
 B((m+h)G)=B(mG)+a\qquad(m\ge M).
\end{equation}
\end{theorem}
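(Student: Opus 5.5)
The plan is to rewrite $B(mG)$ as a max--min integer program in aggregated counts, with $\Lambda(G)$ as its LP relaxation. A source on $mG$ is determined, up to permuting copies, by counts $X_w\ge0$ with $\sum_wX_w=m$, one for each word $w\in\mathcal W$. It is admissible when $\sum_wX_w\mathbf m(w)\in\mathcal M_k(G)+\dots+\mathcal M_k(G)$ ($m$ summands). A count-preserving repair is an integer matrix $Z_{wy}\ge0$ satisfying the two constraints of \eqref{union:fractional-response} with $x_w$ replaced by $X_w$. As in \cref{union:ordinary-composition}, repairs on distinct copies are independent once the local targets are fixed, so
\[
 B(mG)=\max_{X}\ \min_{Z}\ \sum_{w,y}d_{\mathrm H}(w,y)Z_{wy}.
\]
Scaling by $1/m$ turns the inner program into $\Phi_G(X/m)$.

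\emph{Structure of $\Lambda$.} The program \eqref{union:fractional-response} is a linear program whose right-hand side depends linearly on $\mathbf x$. Hence $\Phi_G$ is convex and piecewise linear on the polytope $\mathcal X_G$, with rational breakpoints computable by parametric LP. A convex function on a polytope attains its maximum at a vertex, so $\Lambda(G)$ is attained at a vertex of $\mathcal X_G$ and is rational and computable by enumerating those vertices.

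For the support bound, I would let $F$ be the face of $\operatorname{conv}\mathcal M_k(G)$ whose relative interior contains the average profile, and put $j=\dim F$. Such a vertex is then also a vertex of $\{\mathbf x\ge0:\sum x_w=1,\ \sum x_w\mathbf m(w)\in\operatorname{aff}F\}$. That set is cut out by $1+(k-1-j)\le k$ equations, since profiles live in the $(k-1)$-dimensional hyperplane $\sum_Xa_X=n$. A basic solution therefore has at most $k$ positive coordinates.

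For $\Lambda\ge F_k(G)$, I would take $w$ at distance $F_k(G)$ from $\mathcal P_k(G)$ and put mass $1/k$ on each cyclic relabeling $\pi^i\circ w$. Their average profile is $(n/k,\dots,n/k)$, which is the average of the cyclic relabelings of any proper coloring and so lies in the convex hull. Every target $y$ then costs at least $F_k(G)$ against each source, so $\Phi\ge F_k(G)$.

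\emph{Linear growth.} For the upper bound, fix an admissible $X$. I would apply the proximity theorem for integer programs \cite{eisenbrandweismantel2018} to the inner program. It has $|\mathcal W|+k-1$ rows and entries bounded by $n$, both depending only on $G$. It therefore has an integral optimum within $\ell_1$ distance $O_G(1)$ of an optimal LP solution. Integral feasibility holds because $X$ is admissible, and the LP value is $m\Phi_G(X/m)\le m\Lambda$. Hence $B(mG)\le m\Lambda+O_G(1)$.

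For the lower bound, take an optimal rational vertex $\mathbf x^*$ with common denominator $D$. For $m$ divisible by a suitable multiple $D'$ of $D$, set $X=m\mathbf x^*$. This is admissible because the average profile lies in the convex hull, and a multiple of the hull combination is an integer sum of proper profiles. The integer minimum is at least the LP minimum $m\Lambda$. For other $m$, I would append a bounded number of copies carrying a feasible source. This gives $B(mG)\ge m\Lambda-O_G(1)$.

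\emph{Eventual periodicity.} This is the step I expect to be the main obstacle. The statement $B(mG)\ge L$ is expressible as a Presburger formula in the free variables $(m,L)$: there exist counts $X$ and a decomposition certifying global feasibility of the source profile, such that every integer $Z$ satisfying the constraints has cost at least $L$. The number of quantified variables is fixed by $G$. By effective quantifier elimination, the set $S=\{(m,L):B(mG)\ge L\}$ is an effectively computable semilinear set, and $B(mG)=\max\{L:(m,L)\in S\}$. A function whose hypograph is semilinear is eventually quasi-linear: there are $h$, a threshold $M$, and slopes on each residue class modulo $h$ such that the function is linear on each class for $m\ge M$. The bounded deviation $B(mG)-m\Lambda=O_G(1)$ forces every slope to equal $\Lambda$, which yields \eqref{union:ordinary-eventual-recurrence} with $a=h\Lambda$.

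The delicate points are extracting explicit $h$ and $M$ from the quantifier elimination, and checking that the linear pieces of the semilinear description really capture the maximum. If this proves awkward, the fallback is a block and shortest-path argument modelled on \cref{union:repetition-stabilization}. Proximity confines optimal sources to a bounded neighbourhood of $m\mathbf x^*$, and the defect $m\Lambda-B(mG)$ would then be governed by finitely many bounded corrections indexed by residues.
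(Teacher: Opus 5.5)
Most of your outline matches the paper's proof. The shared steps are the aggregated max--min integer program with $\Phi_G$ as its relaxation, vertex maximization of the convex function $\Phi_G$, and a fixed-matrix proximity bound for $B(mG)\le m\Lambda(G)+O_G(1)$. You cite Eisenbrand--Weismantel where the paper cites Cook--Gerards--Schrijver--Tardos; either works, since the matrix and costs are fixed and integral feasibility follows from admissibility. Presburger definability then gives the recurrence. Your face-of-the-hull argument for the support bound is correct, and even gives $k-\dim F$. Your direct proof of $\Lambda(G)\ge F_k(G)$, spreading mass $1/k$ over cyclic relabelings of a worst word, is cleaner than the paper's route through \cref{union:repetition-theorem}.

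The gap is the lower bound for $m=qD'+r$ with $0<r<D'$. You append $r$ copies carrying a proper word $y_0$ and assert $B(mG)\ge m\Lambda(G)-O_G(1)$, but nothing you say controls what those copies do. Under count-preserving repair they are not inert. A repair of the whole source can shift color counts between the appended copies and the main block, so its restriction to the main block need not be count-preserving. Its cost is then not bounded below by $\Phi_G(qD'\mathbf x^*)=qD'\Lambda(G)$. Convexity and positive homogeneity of $\Phi_G$ only give $\Phi_G(\mathbf X+\mathbf Y)\le\Phi_G(\mathbf X)+\Phi_G(\mathbf Y)$, which is the wrong direction. \Cref{union:strong-counterexample} shows such transfers can lower costs substantially in unions.

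What is missing is a linear lower bound valid at every right-hand side. Take an optimal dual solution of \eqref{union:fractional-response} at $\mathbf x^*$, and let $\mathbf u$ be its coordinates on the row-sum constraints. Dual feasibility does not involve $\mathbf x$, and the profile constraints have zero right-hand side. So weak duality gives $\Phi_G(\mathbf X)\ge\mathbf u\cdot\mathbf X$ for every admissible count vector, and strong duality gives $\mathbf u\cdot\mathbf x^*=\Lambda(G)$. For $\mathbf X=qD'\mathbf x^*+r\mathbf e_{y_0}$, the integer repair cost is therefore at least $\Phi_G(\mathbf X)\ge m\Lambda(G)-r\bigl(\Lambda(G)-u_{y_0}\bigr)$, an error bounded independently of $m$. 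This is exactly the paper's step.

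The same gap propagates to your periodicity argument. It needs the two-sided $O_G(1)$ bound on every residue class to force all eventual slopes to equal $\Lambda(G)$. Once that bound is in place, your appeal to eventual quasi-linearity of Presburger-definable unary functions is valid, though the paper uses less. Writing $\Lambda(G)=a_0/h_0$, it first notes that $h_0B(mG)-a_0m$ takes finitely many values. It then only needs that one-variable Presburger sets are eventually periodic.
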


\begin{proof}
Write the constraints in~\eqref{union:fractional-response} as
$A\mathbf z=(\mathbf x,\mathbf0)$, where $A$ is a fixed integer
matrix, and write $\mathbf c$ for the nonnegative integer cost vector.
When $\mathbf x\in\mathbb Z_{\ge0}^{\mathcal W}$ counts actual
source copies, its sum is their number. The source is feasible if
and only if the same system has a nonnegative integer solution.
In that case the minimum over integer solutions, denoted
$C_G(\mathbf x)$, is its ordinary repair cost. This is the repeated
form of \cref{union:ordinary-composition}. For unnormalized vectors,
we define $\Phi_G$ by the same program; scaling all source proportions
by a nonnegative factor scales its optimum by that factor.

For this fixed matrix and cost vector, the integer-programming
proximity bound of Cook, Gerards, Schrijver, and Tardos
\cite[Corollary~2]{cook1986} gives a constant $D$, independent of
$\mathbf x$, such that
\begin{equation}\label{union:fixed-rounding}
 0\le C_G(\mathbf x)-\Phi_G(\mathbf x)\le D
\end{equation}
whenever an integer solution exists. To apply that bound, write the
equalities as pairs of inequalities and include nonnegativity.
The resulting integral constraint matrix and objective are fixed,
and the row sums bound every feasible variable.

The function $\Phi_G$ is convex and piecewise linear by linear
programming duality. Its maximum on the rational polytope
$\mathcal X_G$ is attained at a vertex. Such a vertex has at most
$k$ positive coordinates: more than $k$ corresponding profile vectors
would be linearly dependent, and a sufficiently small perturbation
in that dependence would preserve both the average profile and the
sum of the coordinates. Enumerating the vertices and solving the
corresponding rational linear programs computes a rational value
$\Lambda(G)$. Any feasible integral source count vector with sum
$m$ has $\mathbf x/m\in\mathcal X_G$. Thus
\eqref{union:fixed-rounding} gives
$B(mG)\le m\Lambda(G)+D$.

For the reverse bound, choose a rational maximizer
$\mathbf x^*\in\mathcal X_G$. Express its average profile as a
rational convex combination of proper profiles, and choose a common
denominator $d$ for these coefficients and $\mathbf x^*$.
Then $d\mathbf x^*$ is the source count vector of a feasible source
on $dG$: the resulting integer numbers of source copies and proper
target copies have the same total profile and may be paired
arbitrarily. Fix a proper word $y_0$. For $m=qd+r$, $0\le r<d$, the
integer vector
$\mathbf x^{(m)}=qd\mathbf x^*+r\mathbf e_{y_0}$ is feasible.
Take an optimal dual solution at $\mathbf x^*$, and let
$\mathbf u$ denote its coordinates corresponding to the row-sum
constraints. Dual feasibility holds for every source count vector,
so
\[
 B(mG)\ge C_G(\mathbf x^{(m)})
 \ge\Phi_G(\mathbf x^{(m)})
 \ge qd\Lambda(G)+ru_{y_0}
 =m\Lambda(G)-r\bigl(\Lambda(G)-u_{y_0}\bigr).
\]
The final error is bounded independently of $m$, proving the first
assertion of~\eqref{union:ordinary-linear-growth}. The second follows
from $B(mG)\ge B_{\mathrm w}(mG)$ and
\cref{union:repetition-theorem}.

Finally, the set of pairs $(m,b)$ with $b=B(mG)$ is definable in
Presburger arithmetic, the language of integer linear inequalities
and congruences combined by Boolean operations and quantifiers.
Indeed, feasible sources,
integer repairs, and their costs are described by the fixed linear
system above. Let $P(m,t)$ assert that some feasible integer source
count vector of sum $m$ has every integer repair of cost at least $t$.
This is a Presburger formula, and $B(mG)=b$ is expressed by
$P(m,b)\wedge\neg P(m,b+1)$. Write $\Lambda(G)=a_0/h_0$ with
integers $a_0\ge0$ and $h_0\ge1$.
By~\eqref{union:ordinary-linear-growth}, the integer
$h_0B(mG)-a_0m$ belongs to a fixed finite set.
For each of its possible values, the set of corresponding $m$ is a
one-variable Presburger set, hence effectively eventually periodic
by effective quantifier elimination~\cite{ginsburg1966,haase2024},
which replaces a quantified formula by an equivalent formula without
quantifiers. After elimination, each
linear inequality is eventually constant and each congruence is
periodic. Taking a common period $h$ divisible by $h_0$ and a common
threshold proves~\eqref{union:ordinary-eventual-recurrence}.
The proximity bound and the bound obtained from the dual solution
are effective, so
the finite set of possible errors can be bounded before applying
quantifier elimination.
\end{proof}

Comparing this slope with the fixing-number slope in
\cref{union:repetition-theorem} determines whether the cost of preserving
color counts accumulates under repetition.

\begin{corollary}\label{union:repetition-gap}
For a fixed graph $G$ with $k=\chi(G)$,
\[
 B(mG)-B_{\mathrm w}(mG)
   =m\bigl(\Lambda(G)-F_k(G)\bigr)+O_G(1).
\]
The difference is bounded if and only if $\Lambda(G)=F_k(G)$.
In that case it is eventually periodic; otherwise it tends to
infinity linearly. The boundedness condition is decidable from the
finite linear programs defining $\Lambda(G)$ and the finite
definition of $F_k(G)$.
\end{corollary}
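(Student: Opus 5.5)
We combine the two repetition theorems, using $k=\chi(G)$ throughout so that $\beta_k(mG)=B_{\mathrm w}(mG)$ and the common-palette quantities are intrinsic. \cref{union:ordinary-repetition} gives $B(mG)=m\Lambda(G)+O_G(1)$. \cref{union:repetition-theorem} gives $0\le mF_k(G)-B_{\mathrm w}(mG)\le r\varepsilon_k(G)\le (k-1)\varepsilon_k(G)$, so $B_{\mathrm w}(mG)=mF_k(G)+O_G(1)$. Subtracting yields the displayed asymptotic formula, with an error bounded by the sum of the two constants.

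For the dichotomy we use $\Lambda(G)\ge F_k(G)$, also from \cref{union:ordinary-repetition}. If $\Lambda(G)>F_k(G)$, the main term $m(\Lambda(G)-F_k(G))$ has positive slope and dominates the bounded error, so the difference tends to infinity linearly and is unbounded. If $\Lambda(G)=F_k(G)$, the main term vanishes and the difference is $O_G(1)$. This proves ``bounded if and only if $\Lambda(G)=F_k(G)$.''

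Eventual periodicity in the equal-slope case needs some care. By \cref{union:ordinary-repetition} there are $h_1,a,M_1$ with $a=h_1\Lambda(G)$ and $B((m+h_1)G)=B(mG)+a$ for $m\ge M_1$. By \cref{union:repetition-stabilization}, $B_{\mathrm w}(mG)=mF_k(G)-c_{m\bmod h_2}$ for $m\ge N$, where $h_2\mid k$. Hence $B_{\mathrm w}((m+h_2)G)=B_{\mathrm w}(mG)+h_2F_k(G)$ for $m\ge N$.

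Take $h=\operatorname{lcm}(h_1,h_2)$ and $m\ge\max\{M_1,N\}$. Then $B((m+h)G)=B(mG)+h\Lambda(G)$ and $B_{\mathrm w}((m+h)G)=B_{\mathrm w}(mG)+hF_k(G)$. When $\Lambda(G)=F_k(G)$ the increments cancel, so the difference is periodic with period $h$ from that threshold on. This is the only step combining more than routine bounds, and the main point to check is that both recurrences hold simultaneously beyond a common threshold; taking the maximum of the two thresholds settles it.

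For decidability, \cref{union:ordinary-repetition} states that $\Lambda(G)$ is rational and effectively computable by enumerating the vertices of $\mathcal X_G$ and solving finitely many rational linear programs. The number $F_k(G)$ is a maximum of a min over finitely many words and proper colorings, so it is computable by exhaustive search. Comparing the two rationals decides whether $\Lambda(G)=F_k(G)$.
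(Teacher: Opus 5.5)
Your proposal is correct and follows the paper's own argument. You subtract the weak repetition bound from the ordinary linear-growth estimate, use $\Lambda(G)\ge F_k(G)$ for the dichotomy, and combine the two eventual recurrences through a common period and threshold. The only omission is the edgeless case $k=1$, which the hypothesis $k\ge2$ of \cref{union:repetition-stabilization} excludes; there all the quantities are zero.
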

\begin{proof}
Subtract \cref{union:repetition-theorem} from
\eqref{union:ordinary-linear-growth}. When the two slopes agree,
\cref{union:repetition-periodic,union:ordinary-repetition} give a
common period for the difference.
\end{proof}

This gives a finite optimization criterion for bounded difference.
A structural characterization of the graphs satisfying
$\Lambda(G)=F_{\chi(G)}(G)$ remains a separate question.


\section{Fixed-parameter recognition}\label{rec:section}

The composition laws in \cref{union:composition-section} retain exactly
the local information needed for repair, but those data can be large.
We now address computation on arbitrary graphs. Small weak villainy
forces a small vertex cover in the graph or its complement, so the
relevant colorings can be described by boundedly many vertex types.

The classifications of graphs with $B(G)=0$ in
\cite{clark2006} and with $B(G)=2$ in \cite{jahanbekam2021}
give polynomial-time recognition algorithms for those two classes.
We prove the stronger conclusion that recognition is fixed-parameter
tractable in the threshold, for both repair rules.

\begin{theorem}[Exact computation and recognition]\label{rec:main}
\label{rec:exact-parameter}
There are computable functions $f,h$ and an absolute constant $c$
such that both $B(G)$ and $B_{\mathrm w}(G)$ can be computed exactly
on an $n$-vertex graph in time
\[
 h(B_{\mathrm w}(G))(n+1)^c,
\]
without either value being supplied. Moreover, given an integer
$t\geq0$, both
\[
\min\{B(G),t+1\}\qquad\text{and}\qquad
\min\{B_{\mathrm w}(G),t+1\}
\]
can be computed in time $f(t)(n+1)^c$.
Consequently, equality and threshold recognition for either parameter
are fixed-parameter tractable in $t$, and belong to $\mathrm P$
for every fixed $t$.
\end{theorem}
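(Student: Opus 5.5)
The proof will rest on the structural matching bound announced in the introduction, and it reduces both computations to a bounded-type problem. \emph{Step 1 (structure).} Suppose $B_{\mathrm w}(G)\le t$, and let $\nu$ be the matching number of $G$. I would show $\nu\le 2t$, or else $\chi(G)\ge n-2t$ together with a bound of $4t$ on the matching number of $\overline G$.

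For the first alternative, take a large matching $e_1,\dots,e_s$ and an optimal coloring $y$. The aim is to build a rearrangement of $y$ that makes each matched edge monochromatic, or at least forces one change per edge, in a way no reprofiling can avoid. Pairs of matched edges whose endpoints carry different colors can be swapped so that both become monochromatic. This is possible unless almost all color classes are tiny, that is, unless $\chi(G)$ is close to $n$. In that case the classes of size at least two number at most $2t$, which bounds the missing edges of the complement, since a nonedge of $G$ is a matching edge of $\overline G$.

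Maximal matchings then give vertex covers of size at most $4t$ and $8t$, or $2t$ and $4t$ after a sharper count, which is all that is needed. Because $B_{\mathrm w}\le B$, the same conclusion holds under $B(G)\le t$. Such covers are found in time $2^{O(t)}n^{O(1)}$ by the standard FPT vertex cover algorithm, or we simply detect that neither exists. In that case $\min\{B,t+1\}=\min\{B_{\mathrm w},t+1\}=t+1$.

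\emph{Step 2 (small cover, $|S|\le 2t$).} Vertices outside $S$ form an independent set. I would partition them by their neighborhoods in $S$, giving at most $2^{2t}$ types, and record the type sizes $n_\tau$. The colorings of $S$, the word restricted to $S$, and the count of each color on each type are all bounded in description. Deciding $B(G)\ge b$, or $B_{\mathrm w}(G)\ge b$, then becomes a statement of fixed shape. It quantifies over finitely many patterns on $S$ and over integer count variables per type and color. Inside it, a universal quantifier over repairs, again expressed through counts, is combined with linear constraints in the $n_\tau$ and $b$. This statement is a Presburger formula $\Psi_t(\mathbf n,b)$ whose length depends only on $t$. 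Two facts make this work: costs only depend on how many vertices of each (type, source color) receive each target color, and $\chi(G)$ is determined by the pattern on $S$.

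Eliminating the quantifiers effectively once for each $t$, before $\mathbf n$ is substituted, leaves a Boolean combination of linear inequalities and congruences in $\mathbf n$ and $b$. Each of these is evaluated in time polynomial in $\log n$. We run this for $b\le t+1$, and for $b\le h$ in the exact version. Exact computation proceeds by iterative deepening on $t=0,1,2,\dots$ until the truncated value falls below $t+1$. This yields $h(B_{\mathrm w}(G))(n+1)^c$, since Step 1 bounds the parameter for the weak value, and the ordinary value is at most $n$ and handled by the same formula family indexed by $t$.

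\emph{Step 3 (complement cover).} Here $\overline G$ has a vertex cover $S'$ of size at most $4t$. Outside $S'$ the graph is a clique, so nearly all color classes are singletons. In any proper coloring, the vertices outside $S'$ are distinct singletons, or they share a class with vertices of $S'$. I would group the outside vertices by their nonneighbors in $S'$, which are their neighbors in $\overline G$.

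The obstacle is that there are about $n$ color names, so words cannot be enumerated naively. I would use the symmetry stressed in the introduction: relabel colors so that the source coloring is fixed. The colors are then classified by the source class they label, which is a bounded amount of data up to $O(t)$ exceptional classes, plus the set of singleton labels. A repair touches at most $t$ labels and vertices beyond this structure, so the permutations fixing the source reduce the choice of disturbed singleton labels to boundedly many orbits, again described by counts per type. The same Presburger elimination then applies.

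I expect two places to be hard: the sharp matching argument in Step 1, and above all the correctness of the orbit reduction in Step 3. That reduction needs a careful proof that a worst disturbance moves only $O(t)$ singleton labels off their own vertices, or that the remaining moves are interchangeable under the stabilizer. I would first try to prove that any disturbance of cost exceeding $t$ can be modified to one that is supported, up to symmetry, on $O(t)$ singletons without lowering its repair cost.
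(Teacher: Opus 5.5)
Your Steps 1 and 2 follow the paper. The matching bound forces a small vertex cover of $G$ or of $\overline G$. In the first case, neighborhood types over the cover give a Presburger formula, which is eliminated with the type sizes and the threshold still free. Step 1 is easier than you expect. Place $\min\{p(\mathbf m),\nu(G)\}$ pairs of equal-colored tokens directly on matching edges, where $p(\mathbf m)=\sum_i\lfloor m_i/2\rfloor$. Then use $n-\chi(G)\le 2p(\mathbf m)$ and $\nu(\overline G)\le n-\chi(G)$. No swapping relative to a fixed coloring is needed.

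The genuine gap is Step 3. There $k=\chi(G)$ is close to $n$, so a formula with a count variable for each type and color has length depending on $n$. Hence ``the same Presburger elimination'' cannot be applied until the colors are reduced to a bounded label set. Your proposal supplies no such reduction, and the lemma you plan to prove targets the wrong side. Sources need no lemma. Renaming singleton colors, simultaneously in source and target, preserves the profile, the set of admissible targets, and distances. So up to symmetry a disturbance is determined by the placement of its nonsingleton colors, which occupy at most $2(n-\chi(G))$ positions. The unbounded part is the minimization over targets. For a fixed source $w$, the placements of singleton colors in a target are not boundedly many, even modulo the stabilizer of $w$; within one type they correspond to conjugacy classes of permutations. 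Restricting to targets within distance $t$ of $w$ would bound them, but you do not prove this, and it serves only the threshold version.

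The missing idea is \cref{rec:singleton}. Let $J$ contain every color of multiplicity at least two in the source or target profile, and let $y$ be any proper target. Recolor $y$ to agree with $w$ on the positions where both carry colors outside $J$. Then fill the remaining positions where $y$ carries a color outside $J$ bijectively with the unused colors outside $J$. The result is still proper and has the same profile, because only singleton classes change. Hence the repair distance equals the minimum distance between label maps over $J\cup\{*\}$, each with $O(n-\chi(G))$ nonstar occurrences. The type descriptions are then boundedly many, and plain enumeration replaces quantifier elimination; see \cref{rec:cocover}.

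For $B_{\mathrm w}$ you also need to handle promoted colors, the target nonsingleton colors that are singletons of the source. Naming them naively costs a factor whose exponent in $n$ grows with $t$. Here your stabilizer idea is the right one. Automorphisms permuting source-star positions within a type, composed with the induced renaming of singleton colors, fix $w$ and preserve properness and distance. They act transitively on promoted-color choices with the same core positions and the same counts per type.

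Finally, the exact claim needs every branch to return exact values, because $B(G)$ may exceed $B_{\mathrm w}(G)$. Step 2 achieves this if you binary-search over the free variable $b\in[0,n]$. In Step 3, testing only repairs touching at most $t$ vertices yields only $\min\{B(G),t+1\}$. You need either the exact enumeration above or the bound $B(G)\le 4(n-\chi(G))$, which follows from the same lemma.
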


The exponent $c$ is independent of the threshold. The proof first forces
a small vertex cover in the graph or its complement, and then uses
different descriptions of repair in the two cases. The small-cover
case uses effective quantifier elimination; no practical bound on
$f$ is asserted. Throughout this section,
$\nu(H)$ denotes the matching number of a graph $H$.

\subsection{A structural consequence of a matching bound}

For an optimal coloring profile $\mathbf m=(m_1,\ldots,m_k)$, where
$k=\chi(G)$, put
\[
p(\mathbf m)=\sum_{i=1}^k\left\lfloor\frac{m_i}{2}\right\rfloor.
\]
This is the number of disjoint pairs of equal-colored tokens available
in a word of profile $\mathbf m$. The next lemma extends the matching
argument of Jahanbekam and Lin~\cite[Lemma~8]{jahanbekam2021}.

\Needspace{6\baselineskip}
\begin{lemma}\label{rec:matching}
For every optimal coloring profile $\mathbf m$ of $G$,
\[
B(G)\geq B_{\mathrm w}(G)\geq\min\{p(\mathbf m),\nu(G)\}.
\]
\end{lemma}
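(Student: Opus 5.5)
The first inequality $B(G)\ge B_{\mathrm w}(G)$ was already noted after \cref{def:repair-radii}, so only the second needs proof. The plan is to exhibit, for the given optimal profile $\mathbf m$, a single admissible disturbance $w\in\mathcal W_{\mathbf m}(G)$ that every proper $k$-coloring $y\in\mathcal P(G)$ misses on at least $s=\min\{p(\mathbf m),\nu(G)\}$ vertices. Then $R^{\mathrm w}_G(\mathbf m)\ge s$, and hence $B_{\mathrm w}(G)\ge s$.

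For the construction, choose a matching $u_1v_1,\ldots,u_sv_s$ in $G$ with $s$ edges, which exists since $s\le\nu(G)$. Since $s\le p(\mathbf m)$, the tokens of profile $\mathbf m$ contain $s$ disjoint equal-colored pairs: take $\lfloor m_i/2\rfloor$ pairs of color $i$ for each $i$ and keep any $s$ of them. Give the $j$-th pair to $u_j$ and $v_j$, so $w(u_j)=w(v_j)$. Distribute the remaining tokens arbitrarily over the remaining vertices. The resulting word has profile exactly $\mathbf m$, so it is admissible, since $\mathbf m$ is feasible.

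For the distance bound, take any proper $y$ on the palette. For each $j$ we have $y(u_j)\ne y(v_j)$ because $u_jv_j$ is an edge. Both endpoints carry the same color under $w$, so $y$ can agree with $w$ on at most one of them. The matching edges are disjoint, so these disagreements are distinct vertices, and $d_{\mathrm H}(w,y)\ge s$. The bound holds for every $y\in\mathcal P(G)$, and in particular for those of profile $\mathbf m$. This argument therefore bounds the weak radius directly, and the ordinary radius follows as well.

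There is no real obstacle. The only point needing care is checking that the pairing respects the profile exactly, which it does because each pair uses two tokens of one color and all counts are bounded by $m_i$.
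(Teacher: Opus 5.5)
Your proposal is correct and follows essentially the same argument as the paper. Both proofs place $s=\min\{p(\mathbf m),\nu(G)\}$ equal-colored token pairs on the edges of a matching, so that every proper coloring, of any profile, disagrees with the word at a distinct endpoint of each matched edge.
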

\begin{proof}
Let $q=\min\{p(\mathbf m),\nu(G)\}$. Choose $q$ edges of a matching
and $q$ disjoint pairs of equal-colored tokens. Place each pair at the
ends of one chosen edge and distribute the remaining tokens arbitrarily.
The resulting word has profile $\mathbf m$. Every proper coloring
differs from it at an endpoint of each of the $q$ selected edges.
The edges are disjoint, so these discrepancies occur at distinct
vertices. This argument does not require the repaired coloring to
have profile $\mathbf m$, and therefore proves the bound for $B_{\mathrm w}(G)$.
\end{proof}

The matching bound forces a graph of small weak villainy to have
either a small vertex cover or very many colors. In the latter case,
the complement has a small vertex cover instead.

\begin{corollary}\label{rec:dichotomy}
If $B_{\mathrm w}(G)\leq t$, then
\[
\nu(G)\leq t\quad\text{or}\quad\chi(G)\geq n-2t.
\]
In the first case $G$ has a vertex cover of size at most $2t$.
In the second case $\nu(\overline G)\leq 2t$, and
$\overline G$ has a vertex cover of size at most $4t$.
The same conclusions hold if $B(G)\leq t$.
\end{corollary}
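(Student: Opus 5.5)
We apply \cref{rec:matching} to a well-chosen optimal coloring profile $\mathbf m$ and split into two cases according to which term of the minimum is attained. If $\nu(G)\le t$ we are in the first alternative. Otherwise $\nu(G)\ge t+1$, and $B_{\mathrm w}(G)\le t$ forces $p(\mathbf m)\le t$ for every optimal coloring profile $\mathbf m$.

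In the second case we convert the bound on $p(\mathbf m)$ into a lower bound on $\chi(G)$. Since $\lfloor m_i/2\rfloor\ge(m_i-1)/2$, we get
\[
 t\ge p(\mathbf m)\ge\frac{n-k}{2},
\]
so $k\ge n-2t$. The vertex-cover statements are the standard fact that the $2\nu$ endpoints of a maximum matching form a vertex cover. This gives a cover of size at most $2t$ in the first case. In the second case it gives a cover of $\overline G$ of size at most $2\nu(\overline G)$, so it remains only to show $\nu(\overline G)\le 2t$.

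For that bound we fix an optimal coloring with profile $\mathbf m$. Every edge of $\overline G$ is either a non-edge of $G$ joining two vertices of the same class, or a non-edge joining two different classes. The first kind is harder to control, so we use a different argument: a maximum matching $\{x_jy_j\}$ of $\overline G$ of size $s$. Each pair $x_j,y_j$ is nonadjacent in $G$. We will try to show directly that $n-\chi(G)\ge s/2$, which would give $s\le 2(n-k)\le 4t$. That is weaker than claimed, and the intended bound of $2t$ needs the sharper inequality $s\le n-k$.

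For the sharper inequality we compare $\overline G$ with the color classes. Let $A$ be the set of vertices in singleton classes, and let $D$ be the set of vertices in classes of size at least $2$. Then $|D|\le 2(n-k)$, because a class of size $m_i\ge2$ contributes $m_i\le 2(m_i-1)$, and $\sum_i(m_i-1)=n-k$. The set $A$ induces a clique in $G$. Indeed, two singleton classes that are nonadjacent could be merged, contradicting optimality. Hence every edge of $\overline G$ meets $D$. So $D$ is a vertex cover of $\overline G$ of size at most $2(n-k)\le 4t$, and therefore $\nu(\overline G)\le|D|\le 4t$. This yields the cover bound $4t$ directly, with $D$ itself as the cover, rather than via the matching.

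For $\nu(\overline G)\le 2t$ we use that each $\overline G$-matching edge has an endpoint in $D$, and we count more carefully: $\nu\le|D|$ alone is insufficient. The main obstacle is therefore the claimed $\nu(\overline G)\le2t$. The refinement we would try first is to choose the optimal coloring to maximize the number of singleton classes, and then argue that a matching of $\overline G$ can use at most $n-k$ edges, each consuming a non-singleton vertex in a controlled way. If this refinement fails, the $4t$ vertex cover obtained from $D$ is what the subsequent algorithm actually needs.

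Finally, the statement for $B(G)\le t$ follows immediately because $B_{\mathrm w}(G)\le B(G)$.
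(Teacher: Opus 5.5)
Your derivation of $\chi(G)\ge n-2t$ from \cref{rec:matching} is correct, and so are the first-case cover, the passage from $B$ to $B_{\mathrm w}$, and your argument that the vertices $D$ in non-singleton classes of an optimal coloring cover $\overline G$ with $|D|\le 2(n-k)\le 4t$ (two nonadjacent singleton classes could be merged). But the statement also asserts $\nu(\overline G)\le 2t$ in the second case, and you leave this unproved. You correctly observe that it needs $s\le n-\chi(G)$ for a matching of size $s$ in $\overline G$, but you offer only a speculative refinement. This is a genuine gap.

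The missing idea is a one-line construction, with no counting over color classes. Let $x_1y_1,\ldots,x_sy_s$ be a matching in $\overline G$. Each $\{x_j,y_j\}$ is independent in $G$. Give each such pair its own color and every unmatched vertex its own color. This is a proper coloring of $G$ with $n-s$ colors, so $\chi(G)\le n-\nu(\overline G)$. In the second case this gives $\nu(\overline G)\le n-\chi(G)\le 2t$. The endpoints of a maximum matching of $\overline G$ then give the $4t$ cover directly, which is the paper's route.

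Your remark that the $D$-cover is what the later algorithm needs is also inaccurate. $D$ is defined from an optimal coloring, which the algorithm cannot compute. The proof of \cref{rec:main} instead computes a maximum matching of $\overline G$ and takes its endpoints as $U$. It rejects when $\nu(\overline G)>2t$, and that rejection relies on exactly the inequality $\chi(G)\le n-\nu(\overline G)$ your proposal is missing.
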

\begin{proof}
Fix an optimal profile $\mathbf m$. Since every $m_i$ is positive,
\[
n-\chi(G)=\sum_i(m_i-1)
\leq 2\sum_i\left\lfloor\frac{m_i}{2}\right\rfloor
=2p(\mathbf m).
\]
If $\nu(G)>t$, \cref{rec:matching} forces $p(\mathbf m)\leq t$,
and hence $\chi(G)\geq n-2t$.

The endpoints of a maximum matching form a vertex cover: otherwise
an edge with both endpoints uncovered could be added to the matching.
This gives the first cover bound. Moreover, a matching of size $q$ in
$\overline G$ supplies $q$ disjoint nonadjacent pairs in $G$. Giving
each pair one color and every remaining vertex its own color gives
$\chi(G)\leq n-q$. Thus $\nu(\overline G)\leq n-\chi(G)\leq 2t$
in the second case, and the same endpoint argument gives its cover
bound. Finally, $B_{\mathrm w}(G)\leq B(G)$.
\end{proof}

\subsection{Descriptions of colorings by types}\label{rec:types}

Let $U\subseteq V(G)$, with $|U|=s$, and suppose that
$V(G)\setminus U$ is either an independent set or a clique.
For a vertex $v\notin U$, define its type to be
$\tau(v)=N_G(v)\cap U$. Write $V_\tau$ for the vertices of type
$\tau$ and $n_\tau=|V_\tau|$, keeping only nonempty types.
There are at most $2^s$ types. Any permutation fixing $U$ pointwise
and preserving each $V_\tau$ is an automorphism of $G$; let $\Gamma$
be the group of these permutations.

For a finite label set $L$, describe a map $x:V(G)\to L$ by its
restriction to $U$, called its \emph{core map}, and by the vectors
\[
\mathbf x_\tau=(x_{\tau i})_{i\in L},\qquad
x_{\tau i}=|V_\tau\cap x^{-1}(i)|.
\]
Two maps have the same description if and only if an element of
$\Gamma$ takes one to the other. The next observation is the reason
that descriptions can replace individual words in a repair problem.

\begin{lemma}\label{rec:distance}
Let $\mathcal A$ and $\mathcal C$ be descriptions of maps $x,y:V(G)\to L$.
Define
\[
\delta(\mathcal A,\mathcal C)
=|\{u\in U:x(u)\neq y(u)\}|
+\sum_\tau\left(n_\tau-\sum_{i\in L}
\min\{x_{\tau i},y_{\tau i}\}\right).
\]
For every fixed realization $x$ of $\mathcal A$, this is the minimum
Hamming distance from $x$ to a realization of $\mathcal C$.
\end{lemma}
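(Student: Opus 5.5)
The distance decomposes over the parts $U$ and $V_\tau$. The set $U$ is fixed pointwise, while the classes $V_\tau$ are permuted independently by $\Gamma$. I will first identify the set of realizations of $\mathcal C$. Fix one realization $y_0$. By the stated correspondence, the realizations are exactly the maps $y_0\circ\sigma$ with $\sigma\in\Gamma$. Equivalently, they are the maps $y$ that agree with the core map of $\mathcal C$ on $U$ and, for each type $\tau$, place exactly $y_{\tau i}$ labels $i$ on $V_\tau$, in any arrangement. Within each class $V_\tau$ the arrangement is unconstrained, and the choices for different classes are independent.

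Now fix $x$ realizing $\mathcal A$. For any such $y$,
\[
 d_{\mathrm H}(x,y)=|\{u\in U:x(u)\ne y(u)\}|+\sum_\tau |\{v\in V_\tau:x(v)\ne y(v)\}|.
\]
The first term is determined by the two core maps and does not depend on the choice of realization. The minimization therefore splits into independent minimizations over the classes $V_\tau$.

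On a single class $V_\tau$ of size $n_\tau$, $x$ has $x_{\tau i}$ vertices of label $i$, and $y$ must have $y_{\tau i}$ such vertices. The vertices of label $i$ on which $x$ and $y$ agree lie in $x^{-1}(i)\cap y^{-1}(i)\cap V_\tau$, so there are at most $\min\{x_{\tau i},y_{\tau i}\}$ of them. The number of agreements is thus at most $\sum_i\min\{x_{\tau i},y_{\tau i}\}$. This gives the lower bound $n_\tau-\sum_i\min\{x_{\tau i},y_{\tau i}\}$ on the disagreements in $V_\tau$.

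For attainment, for each label $i$ assign label $i$ under $y$ to $\min\{x_{\tau i},y_{\tau i}\}$ of the vertices of $V_\tau$ in $x^{-1}(i)$. Then distribute the remaining labels of $y$, with the required multiplicities, over the remaining vertices of $V_\tau$ arbitrarily. The counts are consistent because $\sum_i x_{\tau i}=\sum_i y_{\tau i}=n_\tau$. The resulting map realizes $\mathcal C$ on $V_\tau$ and attains the bound.

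Summing over $U$ and the types shows that the minimum equals $\delta(\mathcal A,\mathcal C)$. The minimum does not depend on which realization $x$ of $\mathcal A$ was fixed. No step here is a real obstacle; the only point needing care is that realizations of a description are precisely the maps with prescribed core map and prescribed per-type counts, which is the stated correspondence with $\Gamma$.
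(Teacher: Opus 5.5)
Your proposal is correct and follows essentially the same route as the paper's proof: the core term on $U$ is fixed, agreements in each $V_\tau$ are bounded by $\sum_i\min\{x_{\tau i},y_{\tau i}\}$, and the bound is attained by matching equal labels and then filling the remaining positions arbitrarily, independently in each type. Your explicit check that the leftover positions and leftover labels both number $n_\tau-\sum_i\min\{x_{\tau i},y_{\tau i}\}$ is a detail the paper leaves implicit.
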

\begin{proof}
The labels on $U$ are fixed. In $V_\tau$, at most
$\min\{x_{\tau i},y_{\tau i}\}$ vertices can receive label $i$ in
both maps. These upper bounds can be achieved simultaneously: for
each $i$, match that many occurrences of $i$ in $y$ to occurrences
of $i$ in the fixed map $x$, and then biject the remaining positions
arbitrarily. Such bijections can be chosen independently in each
type. The displayed formula follows by subtracting the maximum
number of agreements from $n$.
\end{proof}

In particular, if a collection of permitted repairs is invariant
under $\Gamma$, the minimum repair distance depends only on the
description of the disturbed word. The minimum is computed by
minimizing $\delta$ over the descriptions of permitted repairs.

\subsection{Graphs with a small vertex cover}

\begin{proposition}\label{rec:cover}
Given a vertex cover $U$ of size $s$, the values $B(G)$ and $B_{\mathrm w}(G)$
can be computed in time $g(s)(n+1)^{O(1)}$, for a computable function $g$.
\end{proposition}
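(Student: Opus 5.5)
Since $U$ is a vertex cover, $V(G)\setminus U$ is independent, so the type machinery of \cref{rec:types} applies. There are at most $2^s$ types. Every map $x:V(G)\to[k]$ is described by a core map $U\to[k]$ together with integer vectors $(\mathbf x_\tau)_\tau$. Here $k=\chi(G)\le s+1$, because $U$ can be colored with distinct colors and the independent remainder with one extra color. So the number of core maps is at most $(s+1)^s$, and the number of integer variables $x_{\tau i}$ is at most $2^s(s+1)$, bounded in $s$.

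A description with core map $c$ is a proper coloring exactly when $c$ is proper on $G[U]$ and, for each type $\tau$ and label $i$ with $x_{\tau i}>0$, no vertex of $\tau$ has color $i$ under $c$. Both properties and the profile are therefore expressed, for fixed $c$, by linear constraints and implications of the form $x_{\tau i}>0\Rightarrow$ (a condition on $c$). The same holds for the source profile and the constraints $\sum_i x_{\tau i}=n_\tau$.

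By \cref{rec:distance}, and because $\mathcal P_{\mathbf m}(G)$ and $\mathcal P(G)$ are $\Gamma$-invariant, the repair distance depends only on the source description. It equals the minimum of $\delta$ over descriptions of permitted repairs, and $\delta$ is a Presburger term because $\min\{x_{\tau i},y_{\tau i}\}$ is Presburger definable. Thus the statement ``$B(G)\ge b$'' becomes a bounded-length Presburger sentence in the parameters $(n_\tau)_\tau$ and $b$. It asserts that some source core map and vectors $\mathbf x$ exist, with a feasible profile (witnessed by some proper description of the same profile), such that every proper description $\mathbf y$ with the same profile (respectively, any profile) has $\delta\ge b$. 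The weak version is analogous, and one also needs ``$\chi(G)=k$'', which is itself a Presburger condition over the finitely many choices of $k\le s+1$.

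The key step is to perform quantifier elimination once, independently of the input. The formula depends only on $s$, $G[U]$, the set of nonempty types, and $k$, and there are finitely many such data for each $s$. For each, we eliminate the quantified variables effectively \cite{ginsburg1966,haase2024}, obtaining a quantifier-free Boolean combination of linear inequalities and congruences in the free variables $(n_\tau)$ and $b$. Its size is bounded by a computable function of $s$, so building it takes time $g(s)$.

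Given the input, we compute the types and the $n_\tau$ in polynomial time and substitute them. Each atom then has integer coefficients bounded by $g(s)$ and variables of size at most $n$, so it is evaluated in time $g(s)\,\mathrm{poly}(\log n)$. We try each $b=0,\ldots,n$ to find the largest $b$ with $B(G)\ge b$, and likewise for $B_{\mathrm w}$. This gives total time $g(s)(n+1)^{O(1)}$.

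The main obstacle I expect is verifying that the elimination is genuinely uniform: the sentence must not depend on $n$ except through the free variables $n_\tau$. In particular the minimum over repairs must be written as a universally quantified bound over descriptions, not as an enumeration of them. Properness must also be encoded via the positivity implications above rather than via vertex-level data.
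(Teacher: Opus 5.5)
Your proposal is correct and follows the paper's proof essentially exactly. Both arguments write a bounded-length Presburger formula over type counts, keep the type sizes and the threshold free, eliminate quantifiers uniformly in time depending only on $s$, and then substitute the actual $n_\tau$ and search over the threshold. The differences are cosmetic: you phrase $B(G)\ge b$ existentially where the paper phrases $B(G)\le\ell$ universally, you search linearly rather than by binary search, you encode $\min$ directly where the paper uses auxiliary agreement variables, and you treat $\chi(G)=k$ as a Presburger condition where the paper computes it by enumerating core maps.
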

\begin{proof}
Put $I=V(G)\setminus U$. Since $I$ is independent,
$k=\chi(G)\leq s+1$. We first compute $k$. For each
$j=1,\ldots,s+1$, enumerate the proper maps $\varphi:U\to[j]$.
Such a map extends to a proper $j$-coloring of $G$ if and only if
each nonempty type $\tau$ has a color outside $\varphi(\tau)$.
Indeed, all vertices of a type can be assigned any one such color.
The first successful value of $j$ is $k$.

Direct enumeration of the type counts would give an exponent depending
on $s$. Instead, we express the threshold condition by a short formula
in Presburger arithmetic: integer linear inequalities and congruences
with fixed coefficients, combined by Boolean operations and quantifiers.
Effective quantifier elimination converts such a formula into an
equivalent quantifier-free one. We use this established result, not
its quantitative refinements; see Haase et al.~\cite[Corollary~3.2]{haase2024}.
Universal quantifiers can be eliminated by negation.

Include all $2^s$ types, including empty ones. Introduce free variables
$\mathbf N=(N_\tau)_{\tau\subseteq U}$ for their sizes and a free
threshold variable $\ell$. These variables are not yet assigned their
input values. For an integer array $\mathbf x=(x_{\tau i})$ and a
map $a:U\to[k]$, use the following predicates and linear forms:
\[
\begin{aligned}
R(\mathbf x,\mathbf N)
&:\quad x_{\tau i}\geq0\ \text{for all }\tau,i,\qquad
          \sum_i x_{\tau i}=N_\tau\ \text{for all }\tau,\\
P_a(\mathbf x)
&:\quad a\text{ is proper on }G[U],\qquad
          x_{\tau i}=0\ \text{whenever }i\in a(\tau),\\
\mu_i(a,\mathbf x)&=|a^{-1}(i)|+\sum_\tau x_{\tau i}.
\end{aligned}
\]
Here properness of the fixed map $a$ is a Boolean constant.
The first predicate says that $\mathbf x$ describes a word; adding
the second says that the word is proper. Write
$\boldsymbol\mu(a,\mathbf x)=(\mu_i(a,\mathbf x))_{i=1}^k$.

Let $c,a:U\to[k]$ represent, respectively, an original coloring and
a disturbance. The predicate
\[
\begin{aligned}
A_{c,a}(\mathbf x,\mathbf z,\mathbf N):\quad&
R(\mathbf x,\mathbf N)\ \wedge\ P_c(\mathbf x)\
\wedge\ R(\mathbf z,\mathbf N)\\
&{}\wedge\
\boldsymbol\mu(c,\mathbf x)=\boldsymbol\mu(a,\mathbf z)
\end{aligned}
\]
says that the disturbed description $(a,\mathbf z)$ has a feasible
profile, witnessed by the proper description $(c,\mathbf x)$.
For a target map $b:U\to[k]$, put
$d_U(a,b)=|\{u\in U:a(u)\ne b(u)\}|$. Define
$T_{a,b}(\mathbf z,\mathbf y,\mathbf h;\mathbf N,\ell)$ by the
conjunction of
\[
\begin{gathered}
R(\mathbf y,\mathbf N),\qquad P_b(\mathbf y),\qquad
\boldsymbol\mu(b,\mathbf y)=\boldsymbol\mu(a,\mathbf z),\\
0\leq h_{\tau i}\leq z_{\tau i},\qquad
h_{\tau i}\leq y_{\tau i}\quad\text{for every }\tau,i,\\
d_U(a,b)+\sum_\tau N_\tau-\sum_{\tau,i}h_{\tau i}\leq\ell.
\end{gathered}
\]
By \cref{rec:distance}, this predicate has witnesses
$\mathbf y,\mathbf h$ exactly when a same-profile proper target
with core map $b$ is within distance $\ell$. Indeed, the greatest
possible total agreement in each type is
$\sum_i\min\{z_{\tau i},y_{\tau i}\}$, and is attainable for every
fixed realization of the source. The variables $h_{\tau i}$
encode these agreements using only linear constraints.

Let $\Psi_a(\mathbf z;\mathbf N,\ell)$ abbreviate
$\bigvee_{b:U\to[k]}\exists\mathbf y\,\exists\mathbf h\,
T_{a,b}(\mathbf z,\mathbf y,\mathbf h;\mathbf N,\ell)$.
Consider the formula
\begin{equation}\label{rec:presburger}
\Phi(\mathbf N,\ell)=
\bigwedge_{c,a:U\to[k]}\ \forall\mathbf x\,\forall\mathbf z\
\left(
A_{c,a}(\mathbf x,\mathbf z,\mathbf N)
\ \Longrightarrow\
\Psi_a(\mathbf z;\mathbf N,\ell)
\right).
\end{equation}
All quantified arrays range over the integers. After substituting
$N_\tau=n_\tau$, this formula holds precisely when $B(G)\leq\ell$.
Every admissible disturbance has at least one proper profile witness,
so the implication requires a repair for it. A disturbance with an
infeasible profile has no such witness and imposes no condition.
Although the witness is universally quantified, the permitted repairs
depend only on the disturbance, not on which witness was used.
For weak villainy, delete the profile equality from $T_{a,b}$ only;
the source-feasibility predicate $A_{c,a}$ is unchanged. The resulting
formula is equivalent to $B_{\mathrm w}(G)\leq\ell$.

Both formulas have length bounded by a function of $s$: there are
$k\leq s+1$ colors, $2^s$ types, and at most $k^{2s}$ pairs of
core maps. All coefficients and core constants are bounded in $s$.
Crucially, $\mathbf N$ and $\ell$ remain free during quantifier
elimination. Thus the elimination time, the output length, and its
coefficient lengths are bounded by a computable function of $s$,
independently of the numerical type sizes.
Substituting their actual values and evaluating the resulting formula
takes $g(s)(\log(n+1))^{O(1)}$ time. This is a uniform algorithm:
the formulas are generated from the given graph on $U$ and then
processed by an effective elimination procedure. Binary search on
$0\leq\ell\leq n$, together with input processing, computes both
values in the claimed time.
\end{proof}

\subsection{Graphs with a small complement cover}

Suppose next that $W=V(G)\setminus U$ is a clique. The chromatic
number need not be bounded, but its computation still depends on
only a bounded part of the graph.

\Needspace{6\baselineskip}
\begin{lemma}\label{rec:chromatic}
If $|U|=s$ and $V(G)\setminus U$ is a clique, then $\chi(G)$ can be
computed in time $2^{O(s\log(s+2))}(n+1)^{O(1)}$.
\end{lemma}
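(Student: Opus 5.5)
The plan is to reduce to guessing how the clique $W=V(G)\setminus U$ interacts with the color classes that meet $U$. In any proper coloring, every vertex of $W$ has a distinct color. A color class is therefore either a single vertex of $W$ together with some subset of $U$, or a subset of $U$ alone. Each class contains at most one vertex of $W$, so
\[
\chi(G)=|W|+(\text{number of classes contained in }U).
\]
Minimizing $\chi(G)$ thus amounts to choosing a partition of $U$ into independent sets $I_1,\ldots,I_r$ and an injective assignment of some of these sets to vertices of $W$. A set $I_j$ may be assigned to $w\in W$ only if $w$ has no neighbor in $I_j$. The unassigned sets each cost one extra color, so $\chi(G)=|W|+r-(\text{maximum number of assigned sets})$.

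The algorithm has three steps.
\begin{enumerate}
\item Enumerate all partitions of $U$ into independent sets. There are at most the Bell number $B_s\le s^s=2^{O(s\log(s+2))}$ of them, and independence of each part is checked in time polynomial in $s$.
\item For a fixed partition, build the bipartite graph between the parts $I_1,\ldots,I_r$ and $W$, joining $I_j$ to $w$ when $I_j\cap N_G(w)=\varnothing$. Compute a maximum matching $\mu$ in polynomial time in $n$. This partition then yields a proper coloring with $|W|+r-\mu$ colors: matched parts join their $W$-vertex, and each unmatched part and each unmatched $W$-vertex receives its own color. Conversely, every proper coloring arises this way from the partition of $U$ induced by its classes.
\item Return the minimum of $|W|+r-\mu$ over all partitions.
\end{enumerate}

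To make the dependence on $n$ cleaner, the matching step can use types. Only the type $\tau(w)=N_G(w)\cap U$ matters for adjacency to the parts, so $W$ can be collapsed into at most $2^s$ type classes with capacities $n_\tau$, and the matching becomes a flow problem. Polynomial-time matching directly is already enough for the stated bound, however.

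I do not expect a real obstacle. The step that needs care is the correctness claim that every coloring corresponds to a partition plus a matching. This requires the observations that each color class contains at most one vertex of the clique, and that its intersection with $U$ is independent and nonadjacent to that clique vertex. The remaining point is the counting bound $B_s\le 2^{O(s\log(s+2))}$, which gives the stated running time, together with a polynomial cost of matching per partition.
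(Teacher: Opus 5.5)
Your proposal is correct and matches the paper's proof: enumerate the at most $s^s$ partitions $\pi$ of $U$ into independent sets, compute a maximum matching $a(\pi)$ joining each part to the clique vertices with no neighbor in it, and return the minimum of $|\pi|+|W|-a(\pi)$. The type-compression remark is optional, as you say, and the paper does not use it.
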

\begin{proof}
Enumerate the partitions $\pi$ of $U$ into independent sets. In any
proper coloring inducing $\pi$, a vertex of $W$ either has a color
not used on $U$, or shares a color with one part of $\pi$.
No two vertices of $W$ can share a color. Thus the possible sharing
is described by a matching in the bipartite graph whose sides are
$\pi$ and $W$, with a part adjacent to $v\in W$ when $v$ has no
neighbor in that part. If a maximum matching has size $a(\pi)$,
the fewest colors for this partition are
\[
|\pi|+|W|-a(\pi).
\]
Taking the minimum over $\pi$ gives $\chi(G)$. There are at most
$s^s$ partitions for $s\geq1$, and each matching is computed in
polynomial time. The empty partition handles $s=0$.
\end{proof}

We shall use the following elementary fact to handle the many
singleton color classes. It does not depend on the structure of $G$.

\begin{lemma}\label{rec:singleton}
Let $\mathbf m,\mathbf m'$ be positive profiles on $[k]$ of the
same total size, and suppose that $G$ admits a proper coloring of
profile $\mathbf m'$. Let $J\subseteq[k]$ contain every color having
multiplicity at least two in either profile. Replace each color
outside $J$ by a common label $*$. For a fixed word $w$ of profile
$\mathbf m$, the minimum Hamming distance to a proper coloring of
profile $\mathbf m'$ equals the minimum Hamming distance between
the resulting label maps.
\end{lemma}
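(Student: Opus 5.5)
The plan is to compare the two minimization problems directly, by a distance-preserving correspondence between proper targets of profile $\mathbf m'$ and their collapsed images. Write $\sigma$ for the collapsing map that sends each color of $J$ to itself and each color outside $J$ to $*$. Let $\mathcal Y$ be the proper colorings of profile $\mathbf m'$, and let $\sigma(\mathcal Y)$ be their collapsed label maps. The claim is
\[
 \min_{y\in\mathcal Y}d_{\mathrm H}(w,y)=\min_{y\in\mathcal Y}d_{\mathrm H}(\sigma\circ w,\sigma\circ y).
\]
The inequality "$\ge$" is immediate. Collapsing can only turn disagreements into agreements, so $d_{\mathrm H}(\sigma\circ w,\sigma\circ y)\le d_{\mathrm H}(w,y)$ for every $y$.

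For "$\le$", fix $y\in\mathcal Y$. I will produce $y'\in\mathcal Y$ with $\sigma\circ y'=\sigma\circ y$ and
\[
 d_{\mathrm H}(w,y')=d_{\mathrm H}(\sigma\circ w,\sigma\circ y).
\]
Let $S=(\sigma\circ y)^{-1}(*)$. Every color outside $J$ has multiplicity at most one in $\mathbf m'$, and multiplicity exactly one because $\mathbf m'$ is positive. So $y$ restricted to $S$ is a bijection onto $[k]\setminus J$. Likewise, $w$ uses each color outside $J$ exactly once, since $\mathbf m$ is positive and these colors have multiplicity below two.

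Let $S_0=\{v\in S: w(v)\notin J\}$; these are the $*$-positions where the collapsed maps agree. The map $w$ is injective on $S_0$, with image in $[k]\setminus J$. Extend $w|_{S_0}$ to a bijection $\beta:S\to[k]\setminus J$, and define $y'=y$ off $S$ and $y'=\beta$ on $S$. Then $y'$ agrees with $w$ at every vertex of $S_0$, and at positions off $S$ nothing has changed, where distances are measured faithfully on $J$-labels. Hence the distance from $w$ to $y'$ equals the collapsed distance.

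The remaining check, which I expect to be the main point, is that $y'$ is proper with profile $\mathbf m'$. The profile is preserved because $y'$ still uses each non-$J$ color exactly once. Properness needs two facts.

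\begin{enumerate}
\item Two distinct vertices of $S$ receive distinct singleton colors under $y'$, so no edge inside $S$ is monochromatic.
\item A vertex of $S$ and a vertex outside $S$ receive colors from the disjoint sets $[k]\setminus J$ and $J$, so those edges are also fine.
\end{enumerate}

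Edges off $S$ are unchanged from $y$. In other words, permuting singleton color names within a proper coloring preserves properness. This needs no structure of $G$, consistent with the remark before the lemma.

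Minimizing over $y$ then gives equality of the two minima. Here the collapsed target set is taken to be $\sigma(\mathcal Y)$, which is what "the resulting label maps" means.
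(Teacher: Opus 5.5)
Your proposal is correct and follows the paper's proof essentially step for step. Collapsing can only decrease distance, and for the reverse inequality you keep $y$ off its star set, copy $w$ on the common star positions, and fill the remaining star positions bijectively with the unused singleton colors. Properness and the profile survive because only singleton classes change, exactly as in the paper.
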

\begin{proof}
Different labels imply different colors, so passing to labels can
only decrease a distance. To prove the reverse inequality, take any
proper coloring $y$ of profile $\mathbf m'$ and let $R_w,R_y$ be
the positions with label $*$ in the two maps. Both restrictions
to the star positions are bijections onto the same color set
$S=[k]\setminus J$, since every color in $S$ has multiplicity one
in both profiles.

Keep the colors of $y$ outside $R_y$. On $R_w\cap R_y$, assign the
colors of $w$. Fill $R_y\setminus R_w$ bijectively with the unused
colors in $S$. This preserves the target profile and properness:
all changed color classes are singletons. It also makes the two
colorings agree at every position at which their label maps agree.
Consequently its Hamming distance from $w$ is exactly the label
distance of the original pair. Minimizing proves the assertion.
\end{proof}

We can now combine the type description with \cref{rec:singleton}:
only the colors occurring more than once need individual names.
The resulting count is controlled by the size of the cover and by
the difference between the order and chromatic number.

\begin{proposition}\label{rec:cocover}
Suppose that $|U|\leq s$, $V(G)\setminus U$ is a clique, and
$n-\chi(G)\leq e$. Then both $B(G)$ and $B_{\mathrm w}(G)$ can
be computed in time $f(s,e)(n+1)^{O(1)}$, for a computable function $f$.
One may take
\[
f(s,e)=2^{O(se+(s+e)\log(s+e+2))}.
\]
\end{proposition}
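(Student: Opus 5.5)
The plan is to reduce both parameters to a search over boundedly many \emph{label descriptions}, combining the type descriptions of \cref{rec:types} with \cref{rec:singleton}. First compute $k=\chi(G)$ by \cref{rec:chromatic}, and compute the types $\tau(v)=N_G(v)\cap U$ for $v\in W=V(G)\setminus U$. There are at most $2^s$ such types, and this takes polynomial time.

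The basic counting facts are the following. For any proper profile $\mathbf m$ with all $k$ colors used, $\sum_i(m_i-1)=n-k\le e$. Hence at most $e$ colors are repeated, each repeated class has size at most $e+1$, and at most $2e$ vertices lie in repeated classes. Every class of size at least two meets the clique $W$ in at most one vertex, so it contains a vertex of $U$.

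Fix a feasible source profile $\mathbf m$ and a target profile $\mathbf m'$. In the weak case $\mathbf m'$ ranges over all proper profiles. By simultaneous relabeling I may assume the repeated colors of $\mathbf m$ are $1,\dots,j$ with $j\le e$. Since the source is then fixed, I use the symmetry of the remaining labels: the target's repeated colors can be taken inside $J=[\min\{k,j+e\}]$. This works because permuting the labels outside $[j]$ maps singleton colors of the source among themselves. I expect this to need a small argument, since the permutation must fix the source word. The easiest route is to apply the relabeling only after passing to $*$-labels, where singleton source colors are indistinguishable.

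\Cref{rec:singleton} then says that the repair distance equals the distance between label maps $V(G)\to J\cup\{*\}$. Here the star-positions of a target only need to receive distinct colors, which never violates properness. So a target label map is admissible exactly when each $J$-class is an independent set of the prescribed size and the number of stars equals $k-|J'|$, where $J'$ is the set of labels used at least once.

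Next I describe label maps up to the group $\Gamma$ of \cref{rec:types}. A label map is determined by its core map $U\to J\cup\{*\}$, which has at most $(2e+1)^s$ choices, together with the vectors $\mathbf x_\tau$. Because only $O(e)$ vertices carry labels in $J$, the vectors $\mathbf x_\tau$ amount to a multiset of at most $O(e)$ pairs (type, label) on $W$, with every other vertex of $W$ labeled $*$. This gives $2^{O(se+(s+e)\log(s+e+2))}$ descriptions for sources and for targets. Profiles are determined by descriptions, so feasibility of a source description can be checked against the target descriptions. In the ordinary case I also check equality of profiles in the sense of \cref{rec:singleton}.

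By \cref{rec:distance}, and because the set of admissible targets is $\Gamma$-invariant, the minimum repair distance of a source depends only on its description. It equals $\min\delta(\mathcal A,\mathcal C)$ over admissible target descriptions $\mathcal C$. Each value of $\delta$ is computed from the $n_\tau$ in polynomial time. Maximizing over source descriptions with feasible profile gives $B(G)$, and dropping the profile-equality requirement gives $B_{\mathrm w}(G)$. The total time is the product of the two description counts times a polynomial, which matches the stated $f(s,e)$.

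The main obstacle is the relabeling step in the second paragraph: justifying that one fixed set $J$ of at most $2e$ named colors suffices for \emph{all} target profiles at once while the source stays fixed. A secondary point is verifying that properness of a target is faithfully captured by its label map, which requires that $*$-positions need only pairwise distinct colors.
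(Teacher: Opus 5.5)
Your treatment of $B(G)$ is fine and matches the paper. There the target profile equals the source profile, so $J=N$ (or any superset) satisfies \cref{rec:singleton}, and the description search is correct.

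The gap is in the weak case, at exactly the step you flagged. It is not merely unjustified; it is false. For a fixed source $w$, the target's repeated colors cannot in general be confined to one fixed set $J=[\min\{k,j+e\}]$. Call a color \emph{promoted} if it is a singleton in $w$ but repeated in the target. Such a color is tied to its unique position in $w$, and which position gets promoted changes the distance. A permutation of singleton colors that moves the promoted colors into $J$ also moves $w$, so it preserves $\min_y\dH(w,y)$ only for a different source. Passing to $*$-labels does not rescue this: \cref{rec:singleton} requires every promoted color to be named, so the target determines which source positions carry names.

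Here is a concrete failure. Let $G=K_n$ minus an edge $ab$, with $n\ge5$, $U=\{a,b\}$ and $e=1$. Every proper $(n-1)$-coloring has $\{a,b\}$ as its unique repeated class. With $J=\{1,2\}$, take the source description with color $1$ on two vertices $c,d\in W$, color $2$ on a third vertex of $W$, and stars elsewhere, including on $a$ and $b$. Every target whose repeated color is $1$ or $2$ is at distance at least $4$. However, recoloring $b$ with $w(a)$ and $d$ with $w(b)$ gives a proper coloring at distance $2$. In fact $B_{\mathrm w}(G)=2$, while your maximum over source descriptions returns at least $4$.

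The missing idea is to enumerate promoted colors \emph{after} the source is fixed, up to symmetries that fix the source word itself. The paper fixes one full-color representative $w$ of each source description in labels $N\cup\{*\}$. It then uses the maps $T_\gamma y=\pi_\gamma\circ y\circ\gamma^{-1}$, where $\gamma\in\Gamma$ permutes source-star positions inside each outside type and $\pi_\gamma$ permutes the corresponding singleton colors. These maps fix $w$ and preserve properness and distance. They act transitively on promoted sets with the same singleton positions in $U$ and the same number of positions in each outside type.

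It therefore suffices to choose a subset of the source singleton positions in $U$, counts $h_\tau$ with $\sum_\tau h_\tau\le e$, and the set $N'\cap N$. This gives at most $2^{s+e}(2^s+1)^e$ choices of $N'$. The label set is then $J=N\cup N'$, which depends on the source and on this choice. These extra factors still fit inside the stated $f(s,e)$, but without this step your weak algorithm can overestimate $B_{\mathrm w}(G)$.
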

\begin{proof}
Compute $k=\chi(G)$ by \cref{rec:chromatic}, and put
$d=n-k\leq e$. For every positive profile $\mathbf m$ on $[k]$,
\[
\sum_i(m_i-1)=d.
\]
Consequently its set $N$ of nonsingleton colors has size at most
$d$, and
\[
\sum_{i\in N}m_i=d+|N|\leq2e.
\]
Up to a permutation of color names, there are only a bounded number
of such profiles: their nonsingleton excesses $m_i-1$ form a
partition of $d$. A \emph{profile shape} is a profile considered up
to a permutation of color names. We enumerate these shapes, choose a
labeled representative of each, and discard it if it is not the
profile of a proper coloring. The properness test is included in
the description enumeration below. A simultaneous permutation of
color names preserves repair distances, so these representatives
suffice for the outer maximum in both parameters.

First consider $B(G)$ and fix one such profile $\mathbf m$.
Use labels $N\cup\{*\}$, with $*$ replacing singleton colors.
The nonsingleton labels occupy at most $2e$ vertices. For a label
map to come from a proper coloring, the vertices assigned each
nonstar label must form an independent set. This can be checked from
the description: its vertices in $U$ must be independent, the label
may occur at most once in the clique $W$, and any such vertex must have a type
disjoint from that label's vertex set in $U$. The star positions impose no
constraint, because they receive distinct singleton colors.
We retain only descriptions having total count $m_i$ for every
$i\in N$ and total star count $k-|N|$, for both sources and targets.

By \cref{rec:singleton}, the distance to a best repair of the
same profile is exactly the distance to a best proper label map.
By \cref{rec:distance}, this minimum, and then the maximum
over disturbed words, can be evaluated using descriptions alone.
The number of descriptions is bounded by a function of $s,e$.
For example, after choosing the at most $(e+1)^s$ labels on $U$,
the at most $2e$ nonstar occurrences outside $U$ are distributed
among at most $e2^s$ label--type pairs. An overcount is
\[
(e+1)^s(e2^s+1)^{2e}.
\]
Star counts are then determined by the type sizes. Testing all
descriptions and comparing all pairs proves the first running-time
bound.

For weak villainy, the target may have a different profile. Here
the choice of target nonsingleton colors must be made after the
disturbed word has been fixed. We describe this order explicitly.
For each source description using labels $N\cup\{*\}$, choose
one full-color representative $w$, assigning the source singleton
colors bijectively to its star positions. All words represented
by this reduced description have the same weak repair distance.
Indeed, two such words are related by an automorphism in $\Gamma$
followed by a permutation of singleton color names that fixes
$N$. Both transformations preserve Hamming distances and the set
of proper $k$-colorings.

With $w$ fixed, let $N'$ be the nonsingleton colors of a target.
Colors in $N'\setminus N$ are singleton in the source; call them
\emph{promoted colors}. Each is identified by its unique position
in $w$. Enumerating their names separately would introduce a factor
depending on $n$ with exponent $e$. We avoid this by using symmetries
that fix the full source word.

Let $\gamma\in\Gamma$ permute only source-star positions within
each outside type, fixing all other vertices. Define a color
permutation $\pi_\gamma$ by
$\pi_\gamma(w(v))=w(\gamma(v))$ on source singleton colors, and
fix every color in $N$. For any target coloring $y$, put
\[
(T_\gamma y)(v)=\pi_\gamma\bigl(y(\gamma^{-1}(v))\bigr).
\]
Then $T_\gamma w=w$. Moreover, $T_\gamma$ preserves properness
and Hamming distance, because $\gamma$ is a graph automorphism
and $\pi_\gamma$ is a permutation of colors. In particular,
$\dH(w,T_\gamma y)=\dH(w,y)$.

These transformations act transitively on choices of promoted colors
that occupy the same source singleton positions in $U$ and have the
same number of source positions in each outside type. Consequently,
we need only choose a subset of the source singleton positions in
$U$, and integers $h_\tau\geq0$ with $\sum_\tau h_\tau\leq e$,
not exceeding the available number of source-star positions of type
$\tau$. For each choice, fix one set of $h_\tau$ singleton
names from that type. There are at most
$2^s(2^s+1)^e$ such choices: the count vectors are overcounted
by sequences of $e$ type names or unused slots.
Choose also the subset $N'\cap N$, and retain only sets with
$|N'|\leq d$. For each resulting $N'$, enumerate every assignment
\[
m'_i\geq2\ (i\in N'),\qquad m'_i=1\ (i\notin N'),\qquad
\sum_{i\in N'}(m'_i-1)=d.
\]
Every target is equivalent, under a transformation fixing $w$,
to one of these choices. Its excesses are carried along by the
color permutation and occur in the enumeration. For each choice, use the
label set $J\cup\{*\}$ with $J=N\cup N'$. The star label now
represents exactly the colors that are singleton in both profiles.
\cref{rec:singleton} applies to this fixed source and target
profile, and \cref{rec:distance} computes the minimum over
proper target descriptions. Colors in $N'\setminus N$ have one
source occurrence, and colors in $N\setminus N'$ have one target
occurrence; these occurrences retain their individual labels.
Thus no potential agreement is lost by the reduction. Each target
description must have total count $m'_i$ for $i\in J$ and exactly
$k-|J|$ star positions; target profiles with no proper description
are discarded.

There are at most $2^{s+e}(2^s+1)^e$ choices of $N'$, and a number
depending only on $e$ of assignments of its positive excesses.
Each map on the union label set has at most $3e$ nonstar
occurrences and at most $2e$ nonstar labels. Its descriptions are
therefore bounded in number by
\[
(2e+1)^s(2e2^s+1)^{3e}.
\]
Minimize over the target profiles and descriptions for this fixed
$w$, then maximize over source descriptions and admissible source
profile shapes. The order of these operations is exactly the
order in \cref{def:repair-radii}. This proves the second
running-time bound. The displayed description and orbit bounds, the number
of profile shapes, and \cref{rec:chromatic} give the stated
choice of $f(s,e)$, with the cases $e=0$ interpreted directly.
\end{proof}

\subsection{The recognition algorithm}

\begin{proof}[Proof of \cref{rec:main}]
Fix $t$. If $G$ is empty, return zero for both parameters.
Otherwise, compute a maximum matching in $G$. If its size is at most
$t$, its endpoints form a vertex cover of size at most $2t$.
\cref{rec:cover} computes both parameters exactly;
truncate each answer at $t+1$.

Otherwise $\nu(G)>t$. Compute a maximum matching in $\overline G$.
If $\nu(\overline G)>2t$, then
$\chi(G)\leq n-\nu(\overline G)<n-2t$.
\cref{rec:dichotomy} implies $B_{\mathrm w}(G)>t$, and hence also
$B(G)>t$. Return $t+1$ for both parameters.

In the remaining case, the endpoints of a maximum matching in
$\overline G$ give a set $U$ of size at most $4t$ whose complement
is a clique in $G$. Compute $\chi(G)$ using
\cref{rec:chromatic}. If $\chi(G)<n-2t$, the same dichotomy
again gives both answers as $t+1$. If $\chi(G)\geq n-2t$, apply
\cref{rec:cocover} with $s\leq4t$ and $e\leq2t$, and
truncate the exact answers at $t+1$.

This exhausts all graphs and proves correctness. By \cref{rec:cover},
the first branch takes time $g(2t)(n+1)^{O(1)}$, after replacing
$g$ by a nondecreasing computable bound if necessary.
By \cref{rec:cocover}, the complement-cover branch takes time
$2^{O((t+1)^2)}(n+1)^{O(1)}$. Input processing and matching
computations add only fixed polynomial factors. Their exponents
are independent of $t$, proving the stated bound with a computable
$f$. The argument includes $t=0$.

To obtain the exact values with weak villainy as parameter, run this
structural algorithm at
thresholds $0,1,2,4,\ldots$. Whenever an exact-computation branch
is reached, retain both answers before truncation and stop.
Continue only after a rejection, which certifies
$B_{\mathrm w}(G)>t$. By \cref{rec:dichotomy}, rejection is
impossible once $t\ge B_{\mathrm w}(G)$.
If $B_{\mathrm w}(G)\ge1$, the largest tested threshold is at most
$2B_{\mathrm w}(G)$; if it is zero, the first run succeeds.
Taking a nondecreasing envelope of the parameter bounds and
absorbing the number of runs proves the claim.
\end{proof}

\begin{corollary}\label{rec:fixed-complexity}
Unless $\mathrm P=\mathrm{NP}$, there is no fixed integer $t$ for
which recognition of $B(G)=t$ is NP-complete. The same conclusion
holds for $B_{\mathrm w}(G)=t$.
\end{corollary}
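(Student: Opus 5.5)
The plan is to derive the corollary directly from \cref{rec:main}, combined with the standard fact that a problem in $\mathrm P$ can be NP-complete only if $\mathrm P=\mathrm{NP}$. Fix an integer $t$. If $t<0$, the recognition problem is trivial, since both parameters are nonnegative, and every instance is rejected. For $t\ge0$, the algorithm of \cref{rec:main} computes $\min\{B(G),t+1\}$ in time $f(t)(n+1)^c$. Because $t$ is fixed, $f(t)$ is a constant, so this running time is polynomial in $n$, and hence polynomial in the input length.

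To decide whether $B(G)=t$, compute the truncated value and accept exactly when it equals $t$. This is correct because
\[
 \min\{B(G),t+1\}=t \iff B(G)=t,
\]
which holds since $t<t+1$. The same argument applies verbatim to $B_{\mathrm w}(G)$, using the second truncated value supplied by \cref{rec:main}. It follows that both recognition problems lie in $\mathrm P$ for every fixed $t$.

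It remains to rule out NP-completeness. Suppose the recognition problem for some fixed $t$ were NP-complete. Then every NP language would polynomial-time reduce to it. Composing such a reduction with the polynomial-time algorithm above would decide every NP language in polynomial time, which gives $\mathrm{NP}\subseteq\mathrm P$ and hence $\mathrm P=\mathrm{NP}$. Taking the contrapositive yields the statement.

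The only point requiring care is that \enquote{polynomial time} must be measured in the input size. Here the input is the graph, so $n+1$ is at most linear in the encoding length. The constant $c$ is absolute and the factor $f(t)$ is fixed once $t$ is fixed, so the bound is genuinely polynomial. No further obstacle is expected.
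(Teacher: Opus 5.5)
Your proposal is correct and follows the same route as the paper: \cref{rec:main} computes the truncated values in time $f(t)(n+1)^c$, which puts recognition of $B(G)=t$ and $B_{\mathrm w}(G)=t$ in $\mathrm P$ for each fixed $t$, and an NP-complete problem in $\mathrm P$ forces $\mathrm P=\mathrm{NP}$. Your added remarks on $t<0$ and on measuring $n$ against the input length are harmless refinements of the same argument.
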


\section{Concluding remarks and questions}\label{sec:questions}

The composition laws quantify how global color counts couple components.
For repeated copies, \cref{union:repetition-stabilization,union:ordinary-repetition}
give effective eventual formulas for both repair rules. Their slopes
also decide whether the difference between the two values stays bounded.
The next questions ask for structural descriptions and sharper bounds
than the finite local optimizations provide.

\begin{problem}\label{problem:union-equality}
Find structural or efficiently checkable versions of the criteria in
\cref{union:weak-equality-general,union:ordinary-equality}.
In particular, characterize the graphs satisfying
$B_{\mathrm w}(G)=F_{\chi(G)}(G)$, equivalently those for which weak
villainy is additive over every repetition.
\end{problem}

For either parameter $F$, the \emph{superadditivity deficit} is
$F(G)+F(H)-F(G\sqcup H)$; a positive value measures a failure of
superadditivity.

\Needspace{6\baselineskip}
\begin{problem}\label{problem:union-deficits}
Determine sharp bounds on the superadditivity deficits at prescribed
orders and chromatic numbers. For weak villainy, the unresolved deficit
case has unequal chromatic numbers.
Give a structural characterization of the graphs satisfying
$\Lambda(G)=F_{\chi(G)}(G)$, the finite optimization criterion for
bounded difference in \cref{union:repetition-gap}.
\end{problem}

The explicit stabilization bound in \cref{union:repetition-stabilization}
comes from a general zero-sum estimate and need not be sharp.
\begin{problem}\label{problem:repetition-onset}
Determine sharp bounds on the least stabilization index for weak
repetition, meaning the least index from which the eventual periodic
formula holds, and give graph-theoretic descriptions of its least period.
Obtain comparable explicit bounds for the ordinary recurrence in
\cref{union:ordinary-repetition}.
\end{problem}

The odd-cycle theorem of \cite[Theorem~3.15]{jiang2026exact} already
gives singleton response families on a three-color palette, as discussed
after \cref{union:ordinary-composition}.
\begin{problem}\label{problem:response-compression}
For a fixed palette size, determine further graph classes whose response
families in \cref{union:ordinary-composition} admit polynomial-size
representations supporting polynomial-time composition and queries.
How much can these families be compressed without changing ordinary
villainy after any further disjoint union?
\end{problem}

The parameter dependence in \cref{rec:main} comes from general-purpose
quantifier elimination. Can both values be computed by a direct
combinatorial algorithm in time
$2^{B_{\mathrm w}(G)^{O(1)}}(|V(G)|+1)^{O(1)}$?

\section*{Acknowledgments}

Lujia Wang is partially supported by the National Natural Science
Foundation of China (grant No.~12371359). Jiang Jiang, Yuhan Jiang,
Yiyang Li, and Aitao Zhang are
supported by the Math+AI research project of Hainan Bielefeld University
of Applied Sciences.

\section*{Contributions and use of AI}

The authors used AI tools from OpenAI and Anthropic to assist with developing and
checking arguments, computational experiments, literature searches,
and manuscript preparation.
The authors are responsible for all proofs, computations, and conclusions.


\end{document}